\def \qed {\hfill \vrule height6pt width 6pt depth 0pt}
\def\textmatrix#1&#2\\#3&#4\\{\bigl({#1 \atop #3}\ {#2 \atop #4}\bigr)}
\def\dispmatrix#1&#2\\#3&#4\\{\left({#1 \atop #3}\ {#2 \atop #4}\right)}
\newcommand{\beg}{\begin{equation}}
	\newcommand{\eeg}{\end{equation}}
\newcommand{\ben}{\begin{eqnarray*}}
	\newcommand{\een}{\end{eqnarray*}}
\newcommand{\A}{\mathbb{A}}
\newcommand{\CA}{\overline{\mathbb{A}}}
\newcommand{\HS}{\mathcal{H}}
\newcommand{\KS}{\mathcal{K}}
\newcommand{\wA}{\hat{A}}
\newcommand{\wB}{\hat{B}}
\newcommand{\wT}{\hat{T}}
\newcommand{\CB}{\overline{\mathbb{B}}}
\newcommand{\la}{\left\langle}
\newcommand{\ra}{\right\rangle}
\newcommand{\Z}{\mathbb{Z}}
\newtheorem{thm}{Theorem}[section]
\newtheorem{cor}[thm]{Corollary}
\newtheorem{prop}[thm]{Proposition}
\numberwithin{equation}{section} \theoremstyle{definition}
\newtheorem{rem}[thm]{Remark}
\def\textmatrix#1&#2\\#3&#4\\{\bigl({#1 \atop #3}\ {#2 \atop #4}\bigr)}
\def\dispmatrix#1&#2\\#3&#4\\{\left({#1 \atop #3}\ {#2 \atop #4}\right)}
\begin{document}
	\title{Spectral constants for the quantum annulus}
	\author{SOURAV PAL, JAMES E. PASCOE AND NITIN TOMAR}
	
	\address[Sourav Pal]{Mathematics Department, Indian Institute of Technology Bombay,
		Powai, Mumbai - 400076, India.} \email{sourav@math.iitb.ac.in}
		
	\address[James E. Pascoe]{Department of Mathematics, Drexel University, Philadelphia,
USA}
	\email{jep362@drexel.edu}	
	
	\address[Nitin Tomar]{Mathematics Department, Indian Institute of Technology Bombay, Powai, Mumbai-400076, India.} \email{tomarnitin414@gmail.com}		
	
	\keywords{Spectral constant, $K$-spectral set, Quantum annulus, Biball variety, Polyannulus}	
	
	\subjclass[2020]{47A25, 47A11}	
		
	\begin{abstract}
		For $\mathbb{A}_r=\{z \in \mathbb{C}: r^{-1}<|z|<r\}$ with $r>1$, we consider the collection \[
		Q\mathbb{A}_r=\{T: T \ \text{is an invertible operator and} \ \|T\|, \|T^{-1}\|\leq r \},
		\] 
		which is referred to as the \textit{quantum annulus}. McCullough-Pascoe \cite{Pas-McCull} proved a dilation theorem for operators in $Q\mathbb{A}_r$. In this article, we refine this dilation theorem and explicitly construct such a dilation. Let $K(\overline{\mathbb{A}}_r)$ be the smallest positive constant for which $\overline{\mathbb{A}}_r$ is a $K(\overline{\mathbb{A}}_r)$-spectral set for operators in $Q\mathbb{A}_r$. A significant result due to Tsikalas established the lower bound $K(\overline{\mathbb{A}}_r) \geq 2$, refining earlier estimates. Recently, Pascoe proved that $\displaystyle K(\overline{\mathbb{A}}_r)\leq  2\left(1+\frac{2r^2}{r^4-1}\right)$ and hence $K(\overline{\mathbb{A}}_r) \to 2$ as $r \to \infty$. In this article, two alternative proofs of Pascoe's upper bound are presented. The first one capitalizes a dilation theorem due to McCullough and Pascoe, while the second involves a certain variety in the Euclidean biball. In the multivariable setting, we show that the biannulus $\overline{\mathbb{A}}_r^2$ is a $K$-spectral set for some $K>0$ for commuting pairs of operators in $Q\mathbb{A}_r$. Furthermore, we derive upper and lower bounds on the smallest spectral constant $K$ for which certain classes of operator tuples in $Q\mathbb{A}_r$ have the closed polyannulus $\overline{\mathbb{A}}_r^n$ as a $K$-spectral set. If we denote the smallest constant by $K(\overline{\mathbb{A}}_r^2)$ for commuting pairs, and $K_{dc}(\overline{\mathbb{A}}_r^n)$ for doubly commuting $n$-tuples in $Q\mathbb{A}_r$, then the resulting bounds are given by
		\[
	2^n \leq K_{dc}(\overline{\mathbb{A}}_r^n) \leq \left(\frac{3r^2-1}{r^2-1}\right)^n \quad \text{and} \quad 2^2 \leq K(\overline{\mathbb{A}}_r^2) \leq \left[4+\left(\frac{r^2+1}{r^2-1}\right)^2+4\left(\frac{r^2+1}{r^2-1}\right)^{1\slash 2}\right],
		\]
	which further imply that $\displaystyle 2^n \leq \lim_{r \to \infty} K_{dc}(\overline{\mathbb{A}}_r^n) \leq 3^n$ and $\displaystyle 2^2 \leq \lim_{r \to \infty} K(\overline{\mathbb{A}}_r^2) \leq 3^2$.	
		\end{abstract}	
		
	\maketitle
	
\noindent 
\section{Introduction}\label{sec_intro}

\vspace{0.1cm}
 
\noindent Throughout the paper, all operators are bounded linear maps defined on complex Hilbert spaces. A contraction is an operator with norm at most $1$.  We shall use the following notations:  $\mathbb{C}$ denotes the complex plane, $\mathbb{D}$ is the open unit disc $\{z : |z|<1\}$ and $\mathbb{T}$ is the unit circle $\{z : |z|=1\}$. For a compact set $X \subseteq \mathbb{C}^n$, we denote by $\text{Rat}(X)$ the algebra of rational functions with singularities outside $X$, which is equipped with the supremum norm $\|.\|_{\infty, X}$. For $K>0$, we say that a compact set $X \subset \mathbb{C}^n$ is a \textit{$K$-spectral set} for a commuting operator tuple $\underline{T}=(T_1, \dotsc, T_n)$ if its Taylor joint spectrum $\sigma_T(\underline{T}) \subseteq X$ and $\|f(\underline{T})\| \leq K\|f\|_{\infty, X}$ for all $f \in \text{Rat}(X)$. Such a constant $K$ is called a \textit{spectral constant}. The set $X$ is called a \textit{spectral set} for $\underline T$ if it is a $K$-spectral set with $K=1$. For $r>1$, consider the annulus 
\[
\mathbb{A}_r=\{z \in \mathbb{C}: r^{-1}<|z|<r\}.
\] 
Let $Q\mathbb{A}_r$ be the class of invertible operators $T$ such that $\|T\|, \|T^{-1}\| \leq r$. The class $Q\A_r$ is known as the \textit{quantum annulus}. A central question to determine the smallest spectral constant $K(\CA_r)$ such that $\CA_r$ is a $K(\CA_r)$-spectral set for every operator in $Q\mathbb{A}_r$ still remains open. However, there are already a considerable number of research articles in this direction, e.g., see \cite{Badea, CrouI, CrouII, Pascoe, Paulsen, Shields, TsikalasII} and the references therein. A notable lower bound $K(\CA_r) \geq 2$ was established by Tsikalas \cite{TsikalasII}, improving the earlier bounds on $K(\CA_r)$ obtained in \cite{Badea}. Also, Crouzeix and Greenbaum \cite{CrouII} showed that $K(\CA_r) \leq 1+\sqrt{2}$. Later, Pascoe proved in \cite{Pascoe} that
\begin{align}\label{eqn_ub}
K(\CA_r) \leq 2\left(1 + \frac{2r^2}{r^4 - 1}\right).
\end{align}
Recently, Crouzeix \cite{CrouIII} obtained a sharper estimate on $K(\CA_r)$ for a quantum annulus operator $T$ acting on a finite dimensional Hilbert space, where $\|T\|, \|T^{-1}\| <r$. Combining the results due to Tsikalas \cite{TsikalasII} and Pascoe \cite{Pascoe}, it follows that ${\displaystyle \lim_{r \to \infty}K(\CA_r)=2}$. Pascoe obtained the bound as in \eqref{eqn_ub} by establishing a correspondence between $\A_r$ and the hyperbola $\mathbb{H}_r=\{(z, w) \in \mathbb{D}^2 : zw=1\slash r^2\}$ via the map $\varphi: \A_r \to \mathbb{H}_r$ given by $\varphi(z)=(z\slash r, z^{-1}\slash r)$. So, the problem of finding $K(\CA_r)$ for operators in $Q\A_r$ reduces to a similar problem in $\mathbb{H}_r$. In the hyperbola setting, the key idea in \cite{Pascoe} is to dilate operator pairs of the form $(Z, W)$, with $ZW=1\slash r^2$ and $\|Z\|, \|W\| < 1$, to commuting contractions $(\widehat{Z}, \widehat{W})$ such that $\widehat{Z}\widehat{W}=r^{-2}, U=r^2(1+r^2)^{-1}(\widehat{Z}+\widehat{W}^*)$ is a unitary and $\|f(Z, W)\|\leq \|f(\widehat{Z}, \widehat{W})\|$ for every holomorphic function $f$ on $\mathbb{H}_r$. Further, estimating $\|Uf(\widehat{Z}, \widehat{W})U^*\|$ gives a bound on $\|f(Z, W)\|$. Consequently, for every $T \in Q\A_r$ and $g \in \text{Rat}(\CA_r)$, it is proved in \cite{Pascoe} that
\begin{align}\label{eqn_01}
	\|g(T)\| \leq 2\left(1+\frac{2r^2}{r^4-1}\right)\|g\|_{\infty,\CA_r} \quad \text{and so,} \quad K(\CA_r) \leq 2\left(1+\frac{2r^2}{r^4-1}\right). 
\end{align}
In this article, we revisit Pascoe’s result and provide two independent proofs in Section \ref{sec_02}. The first one is based on a dilation-theoretic approach utilizing the dilation theorem for an operator in $Q\A_r$ due to McCullough and Pascoe \cite{Pas-McCull}. The second proof follows a different line, making use of a certain variety in the Euclidean biball as studied by Pal and Tomar in \cite{PalII}, offering a more geometric perspective. In particular, it is shown that Pascoe’s methods from \cite{Pascoe}, together with the dilation result from \cite{Pas-McCull} for quantum annulus, give the same estimate as in \eqref{eqn_01} without using the aforementioned dilation theorem in the hyperbola setting. In this direction, we recall an important result by McCullough and Pascoe \cite{Pas-McCull} providing the following characterization of operators in $Q\mathbb{A}_r$. For this purpose, given an invertible operator $T$, we set
\[
\beta(T^*, T):= (r^2+r^{-2})-T^*T-(T^*T)^{-1}.
\] 

\begin{prop}[\cite{Pas-McCull}, Proposition 3.2]\label{prop_3.2}
	An invertible operator $T \in Q\mathbb{A}_r$ if and only if $\beta(T^*, T)\geq 0$.
\end{prop}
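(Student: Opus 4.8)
The plan is to reduce the operator inequality $\beta(T^*,T)\ge 0$ to a scalar spectral condition on the single positive operator $A:=T^*T$. Since $T$ is invertible, $A$ is a positive invertible operator, and I would first rephrase membership in $Q\A_r$ entirely in terms of $A$. Because $\|T\|^2=\|T^*T\|=\|A\|$, the bound $\|T\|\le r$ is equivalent to $A\le r^2 I$. For the inverse I would use the polar decomposition $T=UA^{1/2}$ with $U$ unitary (which exists precisely because $T$ is invertible), so that $T^{-1}=A^{-1/2}U^*$ and hence $\|T^{-1}\|=\|A^{-1/2}\|$; thus $\|T^{-1}\|\le r$ is equivalent to $A^{-1}\le r^2 I$, i.e. $A\ge r^{-2}I$. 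Combining the two inequalities, $T\in Q\A_r$ if and only if $r^{-2}I\le A\le r^2 I$, equivalently $\sigma(A)\subseteq[r^{-2},r^2]$.

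Next I would interpret $\beta(T^*,T)$ through the continuous functional calculus. Writing $f(\lambda)=(r^2+r^{-2})-\lambda-\lambda^{-1}$ for $\lambda>0$, we have $\beta(T^*,T)=f(A)$, so by the spectral theorem $\beta(T^*,T)\ge 0$ if and only if $f(\lambda)\ge 0$ for every $\lambda\in\sigma(A)$. The key scalar computation is the factorization obtained after clearing the denominator: for $\lambda>0$,
\[
\lambda f(\lambda)=-\bigl(\lambda^2-(r^2+r^{-2})\lambda+1\bigr)=-(\lambda-r^2)(\lambda-r^{-2}),
\]
where the roots $r^2$ and $r^{-2}$ are read off from the facts that their product is $1$ and their sum is $r^2+r^{-2}$. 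Since $\lambda>0$, this shows $f(\lambda)\ge 0$ precisely when $(\lambda-r^2)(\lambda-r^{-2})\le 0$, that is, when $r^{-2}\le\lambda\le r^2$.

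Assembling these pieces closes the argument in both directions simultaneously: $\beta(T^*,T)\ge 0 \iff \sigma(A)\subseteq[r^{-2},r^2] \iff T\in Q\A_r$. I do not expect a genuine obstacle here, as the whole statement rests on elementary spectral theory; the only point that demands a little care is the reformulation of $\|T^{-1}\|\le r$ as a lower bound on $A$, since $(T^{-1})^*T^{-1}=(TT^*)^{-1}$ naturally involves $TT^*$ rather than $T^*T$. The cleanest way around this is the polar-decomposition identity $\|T^{-1}\|=\|A^{-1/2}\|$ recorded above; alternatively, one can invoke $\sigma(T^*T)=\sigma(TT^*)$ (valid here because neither spectrum contains $0$) to transfer the bound back to $A=T^*T$.
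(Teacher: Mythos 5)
Your proof is correct and complete: the reduction of both norm conditions to $\sigma(T^*T)\subseteq[r^{-2},r^2]$ (handling $\|T^{-1}\|$ via polar decomposition), followed by the factorization $\lambda f(\lambda)=-(\lambda-r^2)(\lambda-r^{-2})$ and the continuous functional calculus, is exactly the standard argument. The paper itself states this proposition as an imported result from McCullough--Pascoe without reproving it, and your spectral-calculus route is essentially the proof given there, so there is nothing to flag.
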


We say that an invertible operator $T$ is a quantum annulus unitary if $\beta(T^*, T) = 0$. In Proposition \ref{prop_202}, we find a necessary and sufficient condition such that $A\otimes J+ B\otimes J^{-*}$ becomes an isometry, where $J$ is a quantum annulus unitary. This result is of independent interest. The significance of this class in the study of $Q\mathbb{A}_r$ becomes evident from the following result, showing that every operator in $Q\mathbb{A}_r$ dilates to a quantum annulus unitary.

\begin{thm}[\cite{Pas-McCull}, Theorem 1.1 \& Proposition 3.2]\label{thm_01}
	Let $T \in Q\A_r$ be an operator acting on a Hilbert space $\HS$. Then  there exists a quantum annulus unitary $J$ on a Hilbert space $\KS \supseteq \HS$ such that 
	$T^n=P_{\HS}J^n|_{\HS}$ for all $n \in \mathbb{Z}$.
\end{thm}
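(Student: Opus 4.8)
The plan is to mimic the Sz.-Nagy--Schäffer construction of the minimal unitary dilation of a contraction, replacing ``unitary'' by ``quantum annulus unitary'' throughout. The guiding observation is that Proposition \ref{prop_3.2}, after a congruence of $\beta(T^*,T)\ge 0$ by $(T^*T)^{1/2}$, reads $(T^*T-r^2 I)(T^*T-r^{-2}I)\le 0$; that is, the spectrum of $T^*T$ lies in $[r^{-2},r^2]$, whereas the condition $\beta(J^*,J)=0$ forces $(J^*J-r^2 I)(J^*J-r^{-2}I)=0$, so that the spectrum of $J^*J$ sits at the two endpoints $\{r^{-2},r^2\}$ only (indeed $J^*J=r^2P+r^{-2}(I-P)$ for a projection $P$). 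Thus a quantum annulus unitary is to $Q\A_r$ exactly what a unitary is to the unit ball of contractions: the defect $\beta(T^*,T)\ge 0$ measures the failure of $T$ to be a quantum annulus unitary, and the dilation should absorb this defect into added directions. Accordingly, I would first invoke Proposition \ref{prop_3.2} to set $\Delta:=\beta(T^*,T)^{1/2}\ge 0$ and $\Delta_*:=\beta(T,T^*)^{1/2}\ge 0$, with defect spaces $\mathcal{D}=\overline{\mathrm{ran}}\,\Delta$ and $\mathcal{D}_*=\overline{\mathrm{ran}}\,\Delta_*$.

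Next I would realize $J$ on a two-sided enlargement $\KS=\big(\bigoplus_{n<0}\mathcal{D}_*\big)\oplus\HS\oplus\big(\bigoplus_{n>0}\mathcal{D}\big)\supseteq\HS$, defining $J$ by an explicit operator matrix of Schäffer type whose central corner is $T$: on $\HS$ the map sends $h\mapsto Th$ together with a defect term built from $\Delta$ feeding the first copy of $\mathcal{D}$, while on the $\mathcal{D}$- and $\mathcal{D}_*$-legs $J$ acts as a weighted bilateral shift with scalar weights $r$ and $r^{-1}$ chosen so that each column of $J$ has the length dictated by the spectral values $r^{\pm 2}$ of $J^*J$. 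Because the annulus requires simultaneous control of $T$ and $T^{-1}$, one must design the left-hand $\mathcal{D}_*$-leg (governing $J^{-1}$) in tandem with the right-hand $\mathcal{D}$-leg, so that $J$ is invertible with the $(\HS,\HS)$-corner of $J^{-1}$ equal to $T^{-1}$.

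I would then verify the two defining properties separately. The power-dilation identity $T^n=P_\HS J^n|_\HS$ for every $n\in\Z$ follows, as in the classical argument, from the triangular/shift structure: applying $J$ pushes the $\HS$-component forward through $T$ while the defect contributions land in the added summands and are annihilated by $P_\HS$, and the analogous bookkeeping for $J^{-1}$ through the $\mathcal{D}_*$-leg gives the negative powers. The quantum annulus unitary condition $\beta(J^*,J)=0$ would be checked by a direct computation of $J^*J$ and $(J^*J)^{-1}$, where the essential cancellation is precisely the identity $\Delta^2=(r^2+r^{-2})I-T^*T-(T^*T)^{-1}$ supplied by Proposition \ref{prop_3.2}.

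I expect the main obstacle to be the two-sidedness of the problem. In the classical Schäffer construction the defect $I-T^*T\ge 0$ is one-sided and one only needs to push the spectrum of $T^*T$ up to $1$; here the spectrum of $J^*J$ must be driven to \emph{both} endpoints $r^{-2}$ and $r^2$ and avoid the open interval between them, so the forward leg (absorbing the part of $\beta(T^*,T)$ pushing toward $r^2$) and the backward leg (pushing toward $r^{-2}$) must be matched so that the quadratic constraint $(J^*J-r^2 I)(J^*J-r^{-2}I)=0$ holds exactly while $J$ remains invertible. Reconciling invertibility of $J$ with the exact vanishing of the nonlinear quantity $\beta(J^*,J)$---rather than merely a norm bound---is the delicate step, and is where the explicit form of $\beta$ and the positivity $\Delta\ge 0$ must be used in full.
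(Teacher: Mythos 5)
Your structural analysis is correct and well motivated: $\beta(T^*,T)\ge 0$ is indeed equivalent to $(T^*T-r^2I)(T^*T-r^{-2}I)\le 0$, while $\beta(J^*,J)=0$ forces $J^*J=r^2P+r^{-2}(I-P)$ for a projection $P$, so a quantum annulus unitary plays the role that a unitary plays for contractions. But as written the proposal is a plan, not a proof: you never write down the operator matrix for $J$, and the two claims that constitute the theorem --- that $\beta(J^*,J)=0$ holds exactly and that $J$ is invertible with the $(\HS,\HS)$-corner of $J^{-n}$ equal to $T^{-n}$ --- are exactly the items you defer as ``the delicate step.'' That deferral is not a minor omission. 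In particular, your stated design principle that ``each column of $J$ has the length dictated by the spectral values $r^{\pm2}$ of $J^*J$'' cannot be met columnwise on the $\HS$-leg: the column through $\HS$ contains $T$, and $\|Th\|^2$ ranges over all of $[r^{-2},r^2]\|h\|^2$, so no choice of defect entries makes every image vector have length exactly $r$ or $r^{-1}$. What must actually happen is that $J^*J$ is a \emph{non-diagonal} operator mixing $\HS$ with the defect legs and satisfying the quadratic identity $(J^*J-r^2I)(J^*J-r^{-2}I)=0$; until the entries of $J$ are specified and this cancellation is exhibited (it must reduce to $\Delta^2=(r^2+r^{-2})I-T^*T-(T^*T)^{-1}$ in some concrete block computation), the argument has a genuine gap.

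For comparison, the construction used in this paper (Theorem \ref{lem_01}) shows the infinite Schäffer apparatus is unnecessary: the single $2\times 2$ upper-triangular block
\[
\hat{T}=\begin{bmatrix}
T & T(T^*T)^{-1/2}\beta(T^*,T)^{1/2}\\
0 & T^{-*}
\end{bmatrix}
\]
on $\HS\oplus\HS$ is already a quantum annulus unitary \emph{extending} $T$. Because both $\hat T$ and $\hat T^{-1}$ are upper triangular with $T^{\pm 1}$ in the $(1,1)$ corner, the identity $T^n=P_{\HS}\hat T^n|_{\HS}$ for all $n\in\Z$ is immediate --- this sidesteps entirely the two-sided bookkeeping you identify as the main obstacle --- and $\beta(\hat T^*,\hat T)=0$ is a short direct computation using $\beta(T^*,T)=(r^2+r^{-2})-T^*T-(T^*T)^{-1}$. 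If you want to rescue your approach, the fastest route is to notice that once an extension (rather than a genuine dilation) is sought, a finite enlargement suffices and the negative powers come for free.
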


In Section \ref{sec_02}, we provide a refinement of the above dilation theorem by showing that every operator $T$ in $Q\mathbb{A}_r$ acting on a Hilbert space $\HS$ extends to a quantum annulus unitary, together with its explicit construction. The motivation of such a construction comes from an earlier work due to Pascoe \cite{Pascoe}. The dilation space we construct is $\mathcal{H} \oplus \mathcal{H}$, while the dilation obtained by the authors of \cite{Pas-McCull} acts on $L^2(\mathbb{T}) \otimes \mathcal{H}$ when $\sigma((T^*T)^{1/2})$ is a finite subset of $(r^{-1}, r)$ and in the general case, the dilation space arises via Stinespring’s dilation theorem. Having established the upper bound as in \eqref{eqn_ub} for operators in $Q\A_r$, we explore the multivariable case. Beyond the one-variable setting, much less is known yet about the $K$-spectral sets for commuting tuples of operators in $Q\A_r$. In Section \ref{sec_biannulus}, it is established that the biannulus $\CA_r \times \CA_r$ is a $K$-spectral for commuting pairs of operators in \(Q\mathbb{A}_r\), where
\[
K = 4 + \left(\frac{r^2 + 1}{r^2 - 1}\right)^2 + 4\left(\frac{r^2 + 1}{r^2 - 1}\right)^{1/2}.
\]
This estimate is derived via the techniques due to Shields \cite{Shields}. A key ingredient in the proof of this result is Ando's inequality \cite{Ando}, which states that for a commuting pair of contractions $(T_1, T_2)$,
$
\|p(T_1, T_2)\| \leq \|p\|_{\infty, \mathbb{T}^2}
$
for every polynomial $p \in \mathbb{C}[z_1, z_2]$. The same algorithm that is used for commuting pairs of operators in $Q\A_r$ does not extend to commuting $n$-tuples for $n \geq 3$. The underlying reason is the failure of Ando's inequality for a commuting $n$-tuple of contractions when $n \geq 3$, a fact established by Parrott in \cite{Par} via a counter example.

\smallskip 
 
Though Ando's type inequality fails for general commuting tuples of contractions, it holds for doubly commuting contractions (e.g., see \cite{NagyFoias6}). Recall that an operator tuple $(T_1, \dotsc, T_n)$ is said to be \textit{doubly commuting} if $T_iT_j=T_jT_i$ and $T_iT_j^*=T_j^*T_i$ for $1 \leq i, j \leq n$ with $i \ne j$. In Section \ref{sec_polyannulus}, we prove that if  $(T_1, \dotsc, T_n)$ is a doubly commuting tuple of operators in $Q\mathbb{A}_r$, then
\[
\|g(T_1, \dotsc, T_n)\| \leq \left(\frac{3r^2 - 1}{r^2 - 1}\right)^n \|g\|_{\infty, \CA_r^n}
\]
for all $g \in \text{Rat}(\CA_r^n)$. The above estimate is obtained by using classical Cauchy's estimates in the multivariable Laurent series representation of a holomorphic function on $\CA_r^n$. The proofs are technical and involve laborious computations.

\smallskip

After establishing that $\CA_r^2$ and $\CA_r^n$ are $K$-spectral sets for the commuting pair and doubly commuting $n$-tuple of operators in $Q\A_r$, respectively, the natural question is to determine the optimal constant in each case. While the general problem remains unsettled even for a single operator in $Q\A_r$, we establish in Section \ref{sec_lower} certain upper and lower bounds for the smallest such constants in each case. Let us denote by $K(\CA_r^2)$ the smallest constant for which every commuting pair in $Q\A_r$ admits $\CA_r^2$ as a $K(\CA_r^2)$-spectral set, and by $K_{dc}(\CA_r^n)$ the corresponding optimal constant for doubly commuting $n$-tuples of operators in $Q\A_r$. The bounds that we obtain are the following:
	\[
2^n \leq K_{dc}(\overline{\mathbb{A}}_r^n) \leq \left(\frac{3r^2-1}{r^2-1}\right)^n \quad \text{and} \quad 2^2 \leq K(\overline{\mathbb{A}}_r^2) \leq \left[4+\left(\frac{r^2+1}{r^2-1}\right)^2+4\left(\frac{r^2+1}{r^2-1}\right)^{1\slash 2}\right],
\]
which show that $\displaystyle 2^n \leq \lim_{r \to \infty} K_{dc}(\overline{\mathbb{A}}_r^n) \leq 3^n$ and $\displaystyle 2^2 \leq \lim_{r \to \infty} K(\overline{\mathbb{A}}_r^2) \leq 3^2$.

\vspace{0.1cm}

\section{Dilation and $K$-spectral set for operators in $Q\A_r$: An alternative approach}\label{sec_02}

\vspace{0.1cm}

\noindent As stated in Theorem \ref{thm_01}, McCullough and Pascoe \cite{Pas-McCull} proved that every operator $T$ in $Q\A_r$ acting on a Hilbert space $\HS$ dilates to a quantum annulus unitary $J$, i.e., 
\[
T^n=P_{\HS}J^n|_{\HS} \quad \text{for all $n \in \Z$}
\] 
with $\beta(J^*, J)=(r^2+r^{-2})-J^*J-(J^*J)^{-1}=0$. Their proof first considers the case when the spectrum of $(T^*T)^{1/2}$ is a finite subset of $(r^{-1}, r)$, establishing the dilation  of $T$ to a certain multiplication operator on $L^2(\mathbb{T}) \otimes \mathcal{H}$. The general case follows from Stinespring’s dilation theorem. In this section, an explicit construction of such a dilation on the space $\mathcal{H} \oplus \mathcal{H}$ is obtained. As an application, we obtain the upper bound due to Pascoe \cite{Pascoe} as in \eqref{eqn_ub} through two alternative and independent approaches. While the first one capitalizes the dilation theorem of operators in $Q\A_r$, the second one employs a certain variety in the biball studied by the authors of \cite{PalII}. To this end, we begin with the following refinement of the dilation result stated in Theorem \ref{thm_01}.

\begin{thm}\label{lem_01}
	Every operator $T \in Q\A_r$ extends $($and hence dilates$)$ to a quantum annulus unitary 
	\[
	\hat{T}=\begin{bmatrix}
		T & T(T^*T)^{-1\slash 2}\beta(T^*, T)^{1\slash 2}\\
		0 & T^{-*}
	\end{bmatrix}.
	\]
\end{thm}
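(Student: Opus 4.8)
The plan is to verify two separate claims: first, that $\hat T$ is an extension of $T$ (and hence a dilation in the sense of Theorem \ref{thm_01}), and second, that $\hat T$ is a quantum annulus unitary, i.e.\ $\beta(\hat T^*, \hat T)=0$. The extension claim is immediate from the block structure. Since $\hat T$ is block upper-triangular with $(1,1)$ entry $T$, the subspace $\HS\oplus 0$ is invariant under $\hat T$ and $\hat T|_{\HS\oplus 0}=T$. As $T$ and $T^{-*}$ are invertible, $\hat T$ is invertible and $\hat T^{-1}$ is again block upper-triangular, so $\HS\oplus 0$ is invariant under $\hat T^{-1}$ as well. Consequently $P_{\HS}\hat T^{n}|_{\HS}=T^{n}$ for every $n\in\Z$, which is exactly the dilation asserted; the substance of the theorem therefore lies in the identity $\beta(\hat T^*,\hat T)=0$.

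For the main computation I would set $P:=T^*T$ and $s:=r^2+r^{-2}$, so that $\beta:=\beta(T^*,T)=sI-P-P^{-1}$, which is nonnegative by Proposition \ref{prop_3.2}. The structural key is that $\beta$ is a continuous function of the positive operator $P$, so by the functional calculus $\beta^{1/2}$ commutes with $P$ and with every function of $P$. Writing the polar decomposition $T=UP^{1/2}$ with $U$ unitary (possible since $T$ is invertible), the off-diagonal entry of $\hat T$ becomes $C:=T(T^*T)^{-1/2}\beta^{1/2}=U\beta^{1/2}$, whence $T^*C=P^{1/2}\beta^{1/2}$, $C^*C=\beta$, and $T^{-1}T^{-*}=(T^*T)^{-1}=P^{-1}$. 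A block multiplication then gives
\[
\hat T^*\hat T=\begin{bmatrix} P & P^{1/2}\beta^{1/2}\\ \beta^{1/2}P^{1/2} & \beta+P^{-1}\end{bmatrix}=\begin{bmatrix} P & P^{1/2}\beta^{1/2}\\ \beta^{1/2}P^{1/2} & sI-P\end{bmatrix},
\]
where the $(2,2)$ entry is simplified using $\beta+P^{-1}=sI-P$.

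To finish, write $M:=\hat T^*\hat T$. Since every block of $M$ is a function of $P$, these blocks mutually commute, and $M$ may be manipulated as if it were a $2\times 2$ matrix over a commutative ring. I would verify the quadratic relation $M^2-sM+I=0$ directly, the computation reducing entirely to the scalar identities $P\beta=sP-P^2-I$ and $P+(sI-P)=sI$, which make the $(1,1)$, $(2,2)$ and both off-diagonal entries of $M^2-sM+I$ vanish. Multiplying $M^2-sM+I=0$ by the invertible operator $M^{-1}$ yields $M+M^{-1}=sI$, that is, $\beta(\hat T^*,\hat T)=sI-\hat T^*\hat T-(\hat T^*\hat T)^{-1}=0$. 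Hence $\hat T$ is a quantum annulus unitary.

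The computation is elementary once commutativity is in place, so I do not anticipate a genuine obstacle; the only delicate points are the functional-calculus argument guaranteeing that all blocks commute and the precise bookkeeping in the $(2,2)$ entry, where $C^*C=\beta$ must combine with $T^{-1}T^{-*}=P^{-1}$ to produce $sI-P$. The real content is the correct choice of the off-diagonal block $C=T(T^*T)^{-1/2}\beta^{1/2}$: it is engineered precisely so that $C^*C$ restores the missing $P^{-1}$ in $\beta$ and so that the cross terms are symmetric, which is what forces the quadratic relation and hence $\beta(\hat T^*,\hat T)=0$.
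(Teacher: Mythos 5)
Your proof is correct and follows essentially the same route as the paper: both arguments rest on the commutativity of $\beta(T^*,T)^{1/2}$ with $(T^*T)^{1/2}$, compute the same block matrix for $\hat T^*\hat T$, and establish that $(\hat T^*\hat T)^{-1}=(r^2+r^{-2})I-\hat T^*\hat T$. The only cosmetic differences are your use of the polar decomposition to simplify the off-diagonal block and your verification of the quadratic relation $M^2-sM+I=0$ in place of the paper's explicit formula for $(\hat T^*\hat T)^{-1}$, which encode the same computation.
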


\begin{proof}
	Evidently, $\beta(T^*, T)$ commutes with $T^*T$ and so, an application of spectral theorem gives the commutativity of $(T^*T)^{1\slash 2}$ and $\beta(T^*, T)^{1\slash 2}$. It is not difficult to see that 
	\[
	\hat{T}^{-1}=\begin{bmatrix}
		T^{-1} & -\beta(T^*, T)^{1\slash 2}(T^*T)^{-1\slash 2}T^*\\
		0 & T^*
	\end{bmatrix}. 
	\]
	Using the definition of $\beta(T^*, T)$, a few steps of routine computations give that 
	\begin{align*}
		\hat{T}^*\hat{T}
		&=\begin{bmatrix}
		T^*T & (T^*T)^{1\slash 2}\beta(T^*, T)^{1\slash 2}\\
		\beta(T^*, T)^{1\slash 2}(T^*T)^{1\slash 2} & (r^2+r^{-2})-T^*T 
		\end{bmatrix}  \ \ \text{and} \\
			(\hat{T}^*\hat{T})^{-1}
		&=\begin{bmatrix}
		(r^2+r^{-2})-T^*T & -\beta(T^*, T)^{1\slash 2}(T^*T)^{1\slash 2}\\
		-(T^*T)^{1\slash 2}\beta(T^*, T)^{1\slash 2} & T^*T 
		\end{bmatrix}.
	\end{align*}
Consequently, $\beta(\wT^*, \wT)=0$ and $T^n=\wT^n|_{\HS}$ for all $n \in \Z$. The proof is now complete.
\end{proof}

As an application, we provide an alternative proof to the following result due to Pascoe \cite{Pascoe}.

\begin{thm}\label{thm_main}
	Let $T \in Q\mathbb{A}_r$. Then $\displaystyle \|g(T)\| \leq 2\left(1+\frac{2r^2}{r^4-1}\right)\|g\|_{\infty,\CA_r}$ for all $g \in \text{Rat}(\CA_r)$.
\end{thm}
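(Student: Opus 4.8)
The plan is to use the explicit extension of Theorem \ref{lem_01} to reduce the problem to a single quantum annulus unitary, and then to run the norm estimate from \cite{Pascoe}; the payoff of this route is that the quantum annulus unitary already supplies, for free, the unitary that Pascoe previously had to manufacture by a separate dilation in the hyperbola picture.

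First I would reduce to a quantum annulus unitary. Let $J=\wT$ be the extension produced in Theorem \ref{lem_01}, acting on $\HS\oplus\HS$. Since both $J$ and $J^{-1}$ are block upper-triangular with respect to $\HS\oplus 0$, this subspace satisfies $J^{\pm1}(\HS\oplus 0)\subseteq\HS\oplus 0$, hence is invariant under every Laurent polynomial in $J$, and $p(J)|_{\HS}=p(T)$ for each such $p$. Approximating $g\in\text{Rat}(\CA_r)$ uniformly by Laurent polynomials on an annular neighbourhood of $\sigma(J)\subseteq\CA_r$ and using continuity of the functional calculus, I get that $\HS\oplus 0$ is invariant under $g(J)$ and $g(J)|_{\HS}=g(T)$. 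Consequently $g(T)=P_{\HS}g(J)|_{\HS}$ and $\|g(T)\|\le\|g(J)\|$, so it suffices to bound $\|g(J)\|$ for a quantum annulus unitary $J$.

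Next I would extract the hyperbola pair and verify unitarity. Rewriting $\beta(J^*,J)=0$ as $J^*J+(J^*J)^{-1}=(r^2+r^{-2})I$ forces $\sigma(J^*J)\subseteq\{r^2,r^{-2}\}$, so $\|J\|,\|J^{-1}\|\le r$ and the pair $Z=r^{-1}J$, $W=r^{-1}J^{-1}$ consists of commuting contractions with $ZW=r^{-2}I$; thus $(Z,W)$ is an operator point of the hyperbola $\mathbb{H}_r=\{(z,w)\in\D^2:zw=r^{-2}\}$. The key computation is that
\[
U:=\frac{r^2}{1+r^2}\bigl(Z+W^*\bigr)=\frac{r}{1+r^2}\bigl(J+J^{-*}\bigr)
\]
is unitary: using $J^*J^{-*}=J^{-1}J=I$ one finds $(J^*+J^{-1})(J+J^{-*})=J^*J+(J^*J)^{-1}+2I=(r+r^{-1})^2I$, whence $U^*U=I$; and $UU^*=I$ follows identically from $\beta(J,J^*)=0$, which holds because $JJ^*=J(J^*J)J^{-1}$ is similar to $J^*J$ and hence shares its spectrum $\{r^2,r^{-2}\}$. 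This $U$ is precisely the unitary obtained in \cite{Pascoe} after dilating a hyperbola pair; here it is delivered directly by $J$, and this is exactly what removes the need for the hyperbola dilation.

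Finally I would invoke Pascoe's estimate. Identifying $g$ with the holomorphic function $f$ on $\mathbb{H}_r$ through $g(\zeta)=f(\zeta/r,\zeta^{-1}/r)$ gives $\|f\|_{\infty,\mathbb{H}_r}=\|g\|_{\infty,\CA_r}$ and $f(Z,W)=g(J)$. I would then expand $g$ in its Laurent series, separate the analytic and co-analytic parts, and use the unitarity of $U$ exactly as in \cite{Pascoe} to express $g(J)$ as an average of the boundary data $g(re^{i\theta})$ and $g(r^{-1}e^{i\theta})$ against an explicit kernel built from $U$; bounding that kernel produces the factor $2(1+2r^2/(r^4-1))$. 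Combined with the first step this yields $\|g(T)\|\le\|g(J)\|\le 2(1+2r^2/(r^4-1))\|g\|_{\infty,\CA_r}$. I expect the main obstacle to be this last estimate, namely pinning down the precise constant coming from the $U$-kernel, while the decisive new structural input is the unitarity of $U$ in the third step, which is exactly where $\beta(J^*,J)=0$ is used and which lets Pascoe's argument proceed without the hyperbola dilation.
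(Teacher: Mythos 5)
Your proposal is correct and follows essentially the same route as the paper's first proof of this theorem: reduce to a quantum annulus unitary $J$ via the (explicit) dilation, observe that $U=r(1+r^2)^{-1}(J+J^{-*})$ is unitary because $\beta(J^*,J)=0=\beta(J,J^*)$, and then bound $\|Ug(J)U^*\|$ via the Laurent decomposition of $g$ and Cauchy's estimates as in \cite{Pascoe}. The only cosmetic differences are that you invoke the explicit extension of Theorem \ref{lem_01} rather than Theorem \ref{thm_01} and frame the reduction through the hyperbola $\mathbb{H}_r$, neither of which changes the argument.
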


\begin{proof} Let $T \in Q\A_r$ be an operator on a Hilbert space $\HS$. By Theorem \ref{thm_01}, there is an operator $J \in Q\A_r$ on a Hilbert space $\KS \supseteq \HS$ such that $\beta(J^*, J)=0$ and $g(T)=P_{\HS}g(J)|_{\HS}$ for all $g$ in $\text{Rat}(\CA_r)$. Then $U=r(1+r^2)^{-1}(J+J^{-*})$ is a unitary on $\KS$ as $\beta(J^*, J)=0=J\beta(J^*, J)J^{-1}$. Let $g \in \text{Rat}(\CA_r)$. We write $\displaystyle g(z)=a_0+zg^+(z)+(1\slash z)g^-\left(1\slash z\right)$, where $a_0+zg^+(z)$ and $(1\slash z)g^-\left(1\slash z\right)$ are the analytic  and principal parts in the Laurent series expansion of $g$ respectively. Then
\begin{align*}
	Ug(J)U^*
	&=U\left(a_0+Jg^+(J)+J^{-1}g^-(J^{-1})\right)U^*\\
	&=\frac{r}{1+r^2}\left[U\left(Jg^+(J)+a_0\right)J^*+J^{-*}\left(J^{-1}g^{-}(J^{-1})+a_0\right)U^*+Ug^+(J)+g^-(J^{-1})U^*  \right].
	\end{align*}  	
Clearly, $\|g(T)\| \leq \|g(J)\|=\|Ug(J)U^*\|$. Applying the Cauchy estimates as in Theorem 2 of \cite{Pascoe}, and using the facts that $U$ is a unitary and $\|J\|, \|J^{-1}\| \leq r$, we have that
\begin{align*}
\|g(T)\|
&\leq \|Ug(J)U^*\|\\
&\leq \frac{r}{1+r^2}\bigg[r\|rzg^+(rz)+a_0\|_{\infty, \mathbb{T}}+r\|rwg^-(rw)+a_0\|_{\infty, \mathbb{T}}+\|g^+(rz)\|_{\infty, \mathbb{T}}+\|g^-(rw)\|_{\infty, \mathbb{T}} \bigg]\\	
& \leq \frac{r}{1+r^2}\bigg[r\frac{r^2}{r^2-1}+r\frac{r^2}{r^2-1}+\frac{2r^2-1}{r(r^2-1)}+\frac{2r^2-1}{r(r^2-1)} \bigg]\|g\|_{\infty, \CA_r} \ \ [\text{by Cauchy's estimate}]\\
&=2\left(1+\frac{2r^2}{r^4-1}\right)\|g\|_{\infty,\CA_r}.
\end{align*}
The proof is now complete. 
\end{proof}

The authors of \cite{PalII} considered the variety given by intersection of the zero set $Z(q_0)$ of the polynomial $q_0$ with biball $\mathbb{B}_2$, where $q_0(z, w)=zw-(r^{2}+r^{-2})^{-1}$. They proved that an invertible operator $T$ has $\CA_r$ as a spectral set if and only if the operator pair 
\[
\left(\frac{T}{\sqrt{r^2+r^{-2}}}, \frac{T^{-1}}{\sqrt{r^2+r^{-2}}}\right)
\]
has $Z(q_0) \cap \overline{\mathbb{B}}_2$ as a spectral set, which makes a key connection between the non-polynomially convex set $\CA_r$ and the polynomially convex set $Z(q_0) \cap \overline{\mathbb{B}}_2$.  An interested reader is referred to Section 9 in \cite{PalII} for further discussion on the interaction of this variety with the operators associated with $\A_r$. We now present another proof of Theorem \ref{thm_main} capitalizing the variety $Z(q_0) \cap \overline{\mathbb{B}}_2$.

\begin{proof} 
	Let $T \in Q\A_r$ be an operator acting on a Hilbert space $\HS$. We have by Proposition \ref{prop_3.2} that $\beta(T^*, T)=(r^2+r^{-2})-T^*T-(T^*T)^{-1} \geq 0$. Define 
	\[
	(\hat{A}, \hat{B})=\left(\frac{\hat{T}}{\sqrt{r^2+r^{-2}}}, \frac{\hat{T}^{-1}}{\sqrt{r^2+r^{-2}}}\right), \quad \text{where} \quad \hat{T}=\begin{bmatrix}
		T & T(T^*T)^{-1\slash 2}\beta(T^*, T)^{1\slash 2}\\
		0 & T^{-*}
	\end{bmatrix} .
	\]
It follows from the previous proof of Theorem \ref{lem_01} that $\beta(\wT^*, \wT) =0$ and by Proposition \ref{prop_3.2}, $\|\wT\|, \|\wT^{-1}\| \leq r$. Thus, $\sigma(\wT) \subseteq \CA_r$ and by spectral mapping theorem, $\sigma_T(\wA, \wB) \subseteq Z(q_0)\cap \overline{\mathbb{B}}_2$. It is not difficult to see that $\displaystyle U=\frac{\sqrt{r^4+1}}{r^2+1}(\wA+\wB^*)$ is a unitary on $\HS \oplus \HS$. Let $f(z, w)$ be a polynomial on the variety $Z(q_0) \cap \CB_2$. We can write $f(z, w)=a_0+zf^+(z)+wf^-(w)$ on $Z(q_0) \cap \CB_2$, where $f^+(z)$ and $f^-(w)$ are holomorphic polynomials in $z$ and $w$ respectively. A routine calculation shows that
\begin{align*}
Uf(\wA, \wB)U^*&=U\left(a_0+\wA f^+(\wA)+\wB f^-(\wB) \right)U^*\\
&=\frac{\sqrt{r^4+1}}{r^2+1}\left[U(\wA f^+(\wA)+a_0)\wA^*+\wB^*(\wB f^-(\wB)+a_0)U^*+\frac{Uf^+(\wA)}{(r^2+r^{-2})}+\frac{f^-(\wB)U^*}{(r^2+r^{-2})}  \right].
\end{align*}
Clearly, $f(\wA, \wB)$ is an extension of $f(A, B)$ and so, $\|f(A, B)\| \leq \|f(\wA, \wB)\|=\|Uf(\wA, \wB)U^*\|$. For the sake of brevity, write $a_r=1\slash\sqrt{r^2+r^{-2}}$. Since $\displaystyle \|\wA\|, \|\wB\| \leq ra_r$ and $U$ is a unitary, we have
\begin{align*}
	 \|f(A, B)\|& \leq \|Uf(\wA, \wB)U^*\|\\
	&\leq \frac{\sqrt{r^4+1}}{r^2+1}\bigg[ra_r\| ra_rz f^+(ra_rz)+a_0\|_{\infty, \mathbb T}+
	ra_r\| ra_rw \ f^-(ra_rw)+a_0\|_{\infty, \mathbb T} \\ 
	& \qquad \qquad \qquad +a_r^2\| f^+(ra_rz)\|_{\infty, \mathbb T}+ a_r^2 \|f^-(ra_rw)\|_{\infty, \mathbb T} \bigg]\\
	&\leq \frac{\sqrt{r^4+1}}{r^2+1}\bigg[ra_r\frac{r^2}{r^2-1}+
	ra_r\frac{r^2}{r^2-1}  +a_r^2\frac{2r^2-1}{ra_r(r^2-1)}+ a_r^2\frac{2r^2-1}{ra_r(r^2-1)} \bigg] \|f\|_{\infty, Z(q_0)\cap\CB_2}
	\\ &  \hspace{11cm} [\text{by Cauchy's estimate}]\\
	&=2\left(1+\frac{2r^2}{r^4-1}\right)\|f\|_{\infty, Z(q_0)\cap\CB_2}.
\end{align*} 
Since $Z(q_0) \cap \CB_2$ is polynomially convex, polynomials are dense in $\text{Rat}(Z(q_0) \cap \CB_2)$ and so,
\[
\|f(A, B)\| \leq 2\left(1+\frac{2r^2}{r^4-1}\right)\|f\|_{\infty, Z(q_0)\cap\CB_2}
\] 
for all $f \in \text{Rat}(Z(q_0) \cap \CB_2)$. For any $g \in \text{Rat}(\CA_r)$, we can write $g(z)=f(a_rz, a_rz^{-1})$ for some $f \in \text{Rat}(Z(q_0) \cap \CB_2)$ and thus, it follows that
\[
\|g(T)\| \leq \|g(\wT)\|=\|f(\wA, \wB)\|  \leq 2\left(1+\frac{2r^2}{r^4-1}\right)\|f\|_{\infty, Z(q_0)\cap\CB_2}\leq 2\left(1+\frac{2r^2}{r^4-1}\right)\|g\|_{\infty, \CA_r}.
\]
The proof is now complete. 
\end{proof} 
 	
For any operator $J$ with $\beta(J^*, J)=0$, we observed in the first proof of Theorem \ref{thm_main} that the operator $U=r(1+r^2)^{-1}(J+J^{-*})$ defines a unitary. In fact, we can replace $J$ by $J_0=J \oplus rI \oplus r^{-1}I$ and the same proof works. Since $\|J\|, \|J^{-1}\| \leq r$, it follows that $\|J_0\|=\|J_0^{-1}\|=r$. We ask a natural question here: What are all the scalars $a, b \in \mathbb C$ such that $a J+b J^{-*}$ is an isometry for a quantum annulus unitary $J$ with $\|J\|=\|J^{-1}\|=r$? In general, characterizing all operators $A, B$ such that $A\otimes J+ B\otimes J^{-*}$ is an isometry\slash unitary seems a problem of independent interest.

\begin{prop}\label{prop_202}
	Let $J$ be a quantum annulus unitary on a Hilbert space $\HS$ with $\|J\|=\|J^{-1}\|=r$ and let $(A, B)$ be operators (not necessarily commuting) acting on a Hilbert space $\mathcal{L}$. Then $A\otimes J+ B\otimes J^{-*}$ is an isometry if and only if $A^*A=B^*B$ and $A^*B+B^*A+(r^2+r^{-2})A^*A=I$. 
\end{prop}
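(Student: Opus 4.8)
The plan is to translate the isometry condition into an operator identity and then exploit the two-point spectrum of $J^*J$ forced by $\beta(J^*,J)=0$. Writing $S=A\otimes J+B\otimes J^{-*}$, the assertion that $S$ is an isometry is equivalent to $S^*S=I$ on $\mathcal{L}\otimes\HS$. Since $S^*=A^*\otimes J^*+B^*\otimes J^{-1}$, expanding $S^*S$ produces four terms; the two cross terms simplify because $J^*J^{-*}=I=J^{-1}J$, while $J^{-1}J^{-*}=(J^*J)^{-1}$. First I would record the resulting clean expression
\[
S^*S=A^*A\otimes (J^*J)+B^*B\otimes (J^*J)^{-1}+(A^*B+B^*A)\otimes I.
\]

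The crucial input is the structure of $P:=J^*J$. Since $J$ is a quantum annulus unitary, $\beta(J^*,J)=0$ reads $P+P^{-1}=(r^2+r^{-2})I$, so every spectral value $\lambda$ of the positive invertible operator $P$ satisfies $\lambda+\lambda^{-1}=r^2+r^{-2}$, forcing $\lambda\in\{r^2,r^{-2}\}$. Hence, by the spectral theorem, $\HS=\HS_+\oplus\HS_-$ and $P=r^2E_++r^{-2}E_-$ for the orthogonal spectral projections $E_\pm$ with $E_++E_-=I$, whence $P^{-1}=r^{-2}E_++r^2E_-$. I would then argue that \emph{both} projections are nonzero: $\|J\|=r$ gives $r^2\in\sigma(P)$, so $E_+\neq 0$, while $\|J^{-1}\|=r$ together with $\sigma(JJ^*)=\sigma(J^*J)$ (valid since $J$ is invertible) gives $r^{-2}\in\sigma(P)$, so $E_-\neq 0$.

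Substituting into the displayed identity and collecting the coefficients of $E_+$ and $E_-$, the equation $S^*S=I$ becomes
\[
\big[r^2A^*A+r^{-2}B^*B+A^*B+B^*A\big]\otimes E_+ + \big[r^{-2}A^*A+r^2B^*B+A^*B+B^*A\big]\otimes E_- = I\otimes(E_++E_-).
\]
Because $E_+$ and $E_-$ are nonzero orthogonal projections summing to $I$, the elementary fact that $X\otimes E_++Y\otimes E_-=0$ forces $X=Y=0$ (evaluate on vectors $\ell\otimes h_\pm$ with $h_\pm$ in the range of $E_\pm$) lets me split this into the pair $r^2A^*A+r^{-2}B^*B+A^*B+B^*A=I$ and $r^{-2}A^*A+r^2B^*B+A^*B+B^*A=I$. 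Subtracting gives $(r^2-r^{-2})(A^*A-B^*B)=0$, and since $r>1$ we conclude $A^*A=B^*B$; substituting back into either equation yields $(r^2+r^{-2})A^*A+A^*B+B^*A=I$, the asserted condition. The converse is immediate, since if $A^*A=B^*B$ and $A^*B+B^*A+(r^2+r^{-2})A^*A=I$, both bracketed expressions equal $I$ and hence $S^*S=I$. The only delicate point, and the step I expect to require the full hypothesis $\|J\|=\|J^{-1}\|=r$ rather than merely $\le r$, is verifying that both $E_+$ and $E_-$ are nonzero: were either to vanish, only one of the two coefficient equations would be enforced and the identity $A^*A=B^*B$ could not be recovered.
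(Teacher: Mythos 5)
Your proof is correct, and it reaches the same pair of operator equations as the paper but extracts them by a different mechanism. The paper also expands $C^*C$ and uses $\beta(J^*,J)=0$ to rewrite the result in two equivalent forms, but it then tests $C^*C=I$ against sequences $\{y_n\},\{z_n\}$ of unit vectors with $\|Jy_n\|\to r$ and $\|J^{-*}z_n\|\to r$ (which exist because $\|J\|=\|J^{-*}\|=r$), passing to the limit in the quadratic forms to obtain the two equations $r^{\pm 2}A^*A+r^{\mp 2}B^*B+A^*B+B^*A=I$. You instead observe that $\beta(J^*,J)=0$ forces $\sigma(J^*J)\subseteq\{r^2,r^{-2}\}$, so $J^*J=r^2E_++r^{-2}E_-$ with both spectral projections nonzero (the hypothesis $\|J\|=\|J^{-1}\|=r$ entering exactly here, via $\sigma(JJ^*)=\sigma(J^*J)$), and then read off the two equations by restricting to $\mathcal{L}\otimes\operatorname{ran}E_\pm$. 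Your route makes the structural content more visible --- the spectrum of $J^*J$ is literally a two-point set, the relevant values are attained as eigenvalues rather than approximated, and the role of the norm equalities is isolated in the single claim $E_\pm\neq 0$ --- at the cost of invoking the spectral theorem; the paper's sequence argument is more elementary and avoids any decomposition of $\HS$. Both proofs handle the converse identically, and your identification of the nonvanishing of $E_+$ and $E_-$ as the one point where the full hypothesis is needed matches precisely where the paper needs its norming sequences.
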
 	
 	
\begin{proof}
	Let $C=A\otimes J+ B\otimes J^{-*}$ and $c_r=r^{2}+r^{-2}$. A few steps of routine computations give that
	\begin{align}\label{eqn_201}
		C^*C&=\left[A^*\otimes J^*+ B^*\otimes J^{-1}\right]\left[A\otimes J+ B\otimes J^{-*}\right] \notag \\
		&=A^*A \otimes J^*J+ (A^*B+B^*A) \otimes I +B^*B \otimes J^{-1}J^{-*} \notag  \\
		&=(A^*A-B^*B) \otimes J^*J +(A^*B+B^*A+c_rB^*B)\otimes I ,
		\end{align}  
		where in the last equality, we have used the fact that $\beta(J^*, J)=0$. We also have that
	\begin{align}\label{eqn_202}
	C^*C&=A^*A \otimes J^*J+ (A^*B+B^*A) \otimes I +B^*B \otimes J^{-1}J^{-*} \notag  \\
&=A^*A \otimes (c_r-J^{-1}J^{-*})+ (A^*B+B^*A) \otimes I +B^*B \otimes J^{-1}J^{-*} \qquad [\text{as} \ \beta(J^*, J)=0 ] \notag  \\
	&=(B^*B-A^*A) \otimes J^{-1}J^{-*} +(A^*B+B^*A+c_rA^*A)\otimes I.
\end{align}  
Let $C$ be an isometry. Since $\|J\|=\|J^{-*}\|=r$, one can find sequences $\{y_n\}, \{z_n\}$ of elements in $\mathcal{H}$ such that $\|y_n\|=\|z_n\|=1$ and $\lim_{n \to \infty}\|Jy_n\|=\lim_{n \to \infty}\|J^{-*}z_n\|=r$. For $x \in \mathcal{L}$, we have that
\begin{align*}
\|C(x \otimes y_n)\|^2&=\la C^*C(x \otimes y_n), (x \otimes y_n) \ra\\
&=\la((A^*A-B^*B) \otimes J^*J +(A^*B+B^*A+c_rB^*B)\otimes I)(x \otimes y_n), (x \otimes y_n) \ra \qquad [\text{by} \ \eqref{eqn_201}]\\
&=\la (A^*A-B^*B)x, x \ra\|Jy_n\|^2+\la (A^*B+B^*A+c_rB^*B)x, x \ra
\end{align*} 
for every $n \in \mathbb{N}$. Using the fact that $C$ is an isometry and letting $n\to \infty$, it follows that 
\begin{align*}
\|x\|^2=\la (A^*A-B^*B)x, x \ra r^2+\la (A^*B+B^*A+c_rB^*B)x, x \ra
=\la (r^2A^*A+r^{-2}B^*B+A^*B+B^*A)x, x \ra 
\end{align*}
for all $x \in \mathcal{L}$ and so, 
\begin{align}\label{eqn_203}
	r^2A^*A+r^{-2}B^*B+A^*B+B^*A=I.
\end{align}
Using similar computations as above, we have that
\begin{align*} 
	\|C(x \otimes z_n)\|^2&=\la C^*C(x \otimes z_n), (x \otimes z_n) \ra\\
	&=\la((B^*B-A^*A) \otimes J^{-1}J^{-*} +(A^*B+B^*A+c_rA^*A)\otimes I)(x \otimes z_n), (x \otimes z_n) \ra \quad [\text{by} \ \eqref{eqn_202}]\\
	&=\la (B^*B-A^*A)x, x \ra\|J^{-*}z_n\|^2+\la (A^*B+B^*A+c_rA^*A)x, x \ra  
\end{align*}
for every $n \in \mathbb{N}$. Letting $n\to \infty$, it follows that 
\begin{align*}
	\|x\|^2=\la (B^*B-A^*A)x, x \ra r^2+\la (A^*B+B^*A+c_rA^*A)x, x \ra
	=\la (r^{-2}A^*A+r^{2}B^*B+A^*B+B^*A)x, x \ra 
\end{align*}
for all $x \in \mathcal{L}$  and so, 
\begin{align}\label{eqn_204}
r^{-2}A^*A+r^{2}B^*B+A^*B+B^*A=I.
\end{align}
It now follows from \eqref{eqn_203} and \eqref{eqn_204} that $A^*B+B^*A+c_rB^*B=I=A^*B+B^*A+c_rA^*A$ and so, $A^*A=B^*B$. The converse follows trivially from \eqref{eqn_201} which completes the proof.
\end{proof}
 	
The following result is an immediate corollary to the above result.
\begin{cor}
		Let $J$ be a quantum annulus unitary on a Hilbert space $\HS$ with $\|J\|=\|J^{-1}\|=r$ and let $(A, B)$ be operators (not necessarily commuting) acting on a Hilbert space $\mathcal{L}$. Then $A\otimes J+ B\otimes J^{-*}$ is a unitary if and only if 
		\begin{align*}
		& \ \ (i) \ A^*A=B^*B \quad (ii) \ A^*B+B^*A+(r^2+r^{-2})A^*A=I \\
		 & (iii) \ AA^*=BB^* \quad  (iv) \ AB^*+BA^*+(r^2+r^{-2})AA^*=I.
		\end{align*}
\end{cor}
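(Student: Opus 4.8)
The plan is to read off the corollary directly from Proposition~\ref{prop_202} by exploiting the symmetry between an operator and its adjoint. The statement I must prove is that $C := A\otimes J + B\otimes J^{-*}$ is a unitary precisely when conditions $(i)$--$(iv)$ hold. Recall that $C$ is a unitary if and only if both $C$ and $C^*$ are isometries, i.e. $C^*C = I$ and $CC^* = I$. Proposition~\ref{prop_202} already characterizes when $C$ is an isometry, namely $(i)$ $A^*A = B^*B$ together with $(ii)$ $A^*B + B^*A + (r^2+r^{-2})A^*A = I$, so half the work is done. The main task is to derive the corresponding condition for $C^*$ to be an isometry and to recognize it as exactly $(iii)$ and $(iv)$.

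First I would observe that $C^* = A^*\otimes J^* + B^*\otimes J^{-1}$, so that $C^*$ has exactly the same structural form as $C$ but with the pair $(A,B)$ replaced by $(A^*, B^*)$ and the quantum annulus unitary $J$ replaced by $J^*$. I would therefore check that $J^*$ is again a quantum annulus unitary with $\|J^*\| = \|J^{*-1}\| = r$: indeed $\beta((J^*)^*, J^*) = (r^2 + r^{-2}) - JJ^* - (JJ^*)^{-1}$, and since $J$ is a quantum annulus unitary one has $J^*J = (r^2+r^{-2}) - (J^*J)^{-1}$; a short spectral-calculus argument (or noting $JJ^*$ and $J^*J$ have the same spectrum together with the functional relation forced by $\beta(J^*,J)=0$) shows $\beta((J^*)^*, J^*) = 0$ as well, and the norm conditions are immediate since $\|J^*\| = \|J\|$ and $\|(J^*)^{-1}\| = \|J^{-1}\|$. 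With this verified, I can apply Proposition~\ref{prop_202} verbatim to $C^* = A^*\otimes J^* + (B^*)\otimes (J^*)^{-*}$, noting $(J^*)^{-*} = J^{-1}$ so that the second summand is indeed $B^*\otimes J^{-1}$. The proposition then yields that $C^*$ is an isometry if and only if $(A^*)^*(A^*) = (B^*)^*(B^*)$ and $(A^*)^*(B^*) + (B^*)^*(A^*) + (r^2+r^{-2})(A^*)^*(A^*) = I$, which simplify to $AA^* = BB^*$ and $AB^* + BA^* + (r^2+r^{-2})AA^* = I$, i.e. exactly conditions $(iii)$ and $(iv)$.

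Combining the two applications of the proposition, $C$ is a unitary $\iff$ $C$ and $C^*$ are both isometries $\iff$ conditions $(i),(ii)$ and $(iii),(iv)$ all hold simultaneously, which is the desired conclusion. The only genuine point requiring care—the step I expect to be the main (minor) obstacle—is confirming that $J^*$ qualifies as a quantum annulus unitary with the same norm bounds, since Proposition~\ref{prop_202} is stated for a specific $J$ and I am reusing it with $J$ replaced by $J^*$; everything else is a purely formal substitution. This verification is genuinely routine, resting on the fact that $J^*J$ and $JJ^*$ share their spectra and that $\beta(J^*,J) = 0$ is equivalent to $J^*J$ satisfying the scalar relation $t + t^{-1} = r^2 + r^{-2}$ on its spectrum, a relation preserved under passing to $JJ^*$. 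Hence the corollary follows immediately, and I would present it in two or three lines as a direct consequence of applying the proposition to both $C$ and $C^*$.
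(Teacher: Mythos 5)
Your proposal is correct and takes essentially the same route as the paper, which likewise disposes of the corollary by applying Proposition \ref{prop_202} to both $C$ and $C^*=A^*\otimes J^*+B^*\otimes J^{-1}$. The one point you flag as needing care --- that $J^*$ is again a quantum annulus unitary with the same norm bounds --- is indeed the only substantive check, and it holds since $\beta(J,J^*)=J\beta(J^*,J)J^{-1}=0$ (a conjugation identity the paper itself uses earlier), so your verification, while left implicit in the paper's two-line proof, is a welcome addition rather than a deviation.
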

 	 	
 \begin{proof}
 	Let $C=A\otimes J+ B\otimes J^{-*}$. The desired conclusion follows by applying Proposition \ref{prop_202} to both $C$ and its adjoint $C^*=A^*\otimes J^*+ B^*\otimes J^{-1}$.
 \end{proof}	
 	
The scalar case corresponding to the above corollary now becomes trivial. Let $J$ be a quantum annulus unitary on a Hilbert space $\HS$ with $\|J\|=\|J^{-1}\|=r$.  A simple application of the above corollary yields that $a J+b J^{-*}$ is a unitary if and only if $|a|=|b|$ and $|ra+r^{-1}b|=1$, which is equivalent to saying that  $b=a e^{i\theta}$ and $|a|^2=(r^2+r^{-2}+2\cos \theta)^{-1}$ for some $\theta \in \mathbb{R}$.

\medskip 

As mentioned earlier in connection with the operators in $Q\mathbb{A}_r$, Pascoe \cite{Pascoe} studied the variety in bidisc $\mathbb D^2$ given by
\[
\mathbb{H}_r=\{(z, w) \in \mathbb{D}^2 : zw=1\slash r^2\},
\]
which is called the quantum conservative hyperbola. The map $\varphi: \A_r \to \mathbb{H}_r$ given by $\varphi(z)=(z\slash r, z^{-1}\slash r)$ is a bijective holomorphic map with inverse $\varphi^{-1}(z, w)=rz$. It follows from maximum modulus principle that the distinguished boundary of $\A_r$ is its topological boundary $\partial \A_r$. Recall that the distinguished boundary of a compact set $X$ is the Shilov boundary of $X$ with respect to $Rat(K)$, i.e., the smallest closed subset $bX$ of $X$ such that every $f \in \text{Rat}(X)$ attains its modulus on $bX$. A point $w \in X$ is a \textit{peak point} if there exists $f \in \text{Rat}(X)$ such that $f(w)=1$ and $|f(z)|<1$ for all $z \ne w$ in $X$. The function $f$ is called a \textit{peaking function} for $w$. We conclude this section with a characterization of the distinguished boundary $b\overline{\mathbb{H}}_r$ of Pascoe's hyperbola $\mathbb{H}_r$ as in \cite{Pascoe}.

\begin{prop}
	$b\overline{\mathbb{H}}_r=\{(r^{-1}\alpha, r^{-1}\alpha^{-1}): \alpha \in \partial \A_r\}$.
\end{prop}

\begin{proof}
	Let $f: \overline{\mathbb{H}}_r \to \mathbb C$ be a continuous function that is holomorphic on $\mathbb{H}_r$. For $\varphi: \A_r \to \mathbb{H}_r$ given by $\varphi(z)=(z\slash r, z^{-1}\slash r)$, the map $g=f \circ \varphi: \CA_r \to \mathbb{C}$ is a continuous function that is holomorphic on $\A_r$. By maximum modulus principle, there exists $\alpha \in \partial \A_r$ such that $\|g\|_{\infty, \CA_r}=|g(\alpha)|$. It is not difficult to see that $\|f\|_{\infty, \overline{\mathbb{H}}_r}=|f(\varphi(\alpha))|$. Consequently, any function in $\text{Rat}(\overline{\mathbb{H}}_r)$ attains its maximum modulus at some point in the closed set 
	\[
	\varphi(\partial \A_r)=\{(r^{-1}\alpha, r^{-1}\alpha^{-1}): \alpha \in \partial \A_r\}
	\]
and so, $b	\overline{\mathbb{H}}_r\subseteq \varphi(\partial \A_r)$. Take any point $\varphi(\alpha) \in \varphi(\partial\A_r)$. Since any point in $\partial \A_r$ is a peak point, there exists $g \in \text{Rat}(\CA_r)$ such that $g(\alpha)=1$ and $|g(\beta)|<1$ for all $\beta \in \CA_r$ with $\beta \ne \alpha$. Consequently, the map $f=g\circ \varphi^{-1} \in \text{Rat}(\overline{\mathbb{H}}_r)$ and $f$ is a peaking function for $\varphi(\alpha)$. Therefore,  $\phi(\alpha) \in b\overline{\mathbb{H}}_r$ and so, $\varphi(\partial \A_r) \subseteq b\overline{\mathbb{H}}_r$, which completes the proof.
\end{proof}

\section{Biannulus as a $K$-spectral set for a commuting pair of operators in $Q\A_r$}\label{sec_biannulus}

\noindent In Section \ref{sec_02}, it is proved that $\CA_r$ is a $K$-spectral set for operators in $Q\A_r$. However, beyond the one-variable setting, not much is known about $K$-spectral sets for commuting tuples of operators in $Q\A_r$. In this section, we show that the biannulus $\CA_r \times \CA_r$ is a $K$-spectral set for commuting pairs of operators in $Q\A_r$, where
\[
K=4+\left(\frac{r^2+1}{r^2-1}\right)^2+4\left(\frac{r^2+1}{r^2-1}\right)^{1\slash 2}.
\] 
The key ingredient in the proof of this result is the dilation theorem due to Ando \cite{Ando}.  For a commuting pair of contractions $(T_1, T_2)$ acting on a Hilbert space $\HS$, a fundamental theorem due to Ando \cite{Ando} guarantees the existence of a commuting pair of unitaries $(U_1, U_2)$ on a Hilbert space $\mathcal{K}$ containing $\HS$ such that 
$
g(T_1, T_2)=P_{\HS}g(U_1, U_2)|_{\HS}
$
for all $g \in \text{Rat}(\overline{\mathbb{D}}^2)$. Now, a simple application of spectral mapping principle gives that
\[
\|g(T_1, T_2)\| \leq \|g(U_1, U_2)\| =\|g\|_{\infty, \sigma_T(U_1, U_2)} \leq \|g\|_{\infty, \mathbb{T}^2}
\]
for all $g \in \text{Rat}(\overline{\mathbb{D}}^2)$. The inequality above is known as a von Neumann's type inequality for commuting pairs of contractions, commonly referred to as Ando's inequality. As an application, we obtain the following estimates for holomorphic functions on $\CA_r^2$.

\begin{prop}\label{prop_estimate}
	Let $g(z_1, z_2)$ be a non-zero holomorphic function on the biannulus $\CA_r^2$ and let the unique Laurent representation of $g$ on $\CA_r^2$ be given by
	\begin{equation*}
		g(z_1, z_2)=\overset{\infty}{\underset{n=-\infty}{\sum}}\ \overset{\infty}{\underset{m=-\infty}{\sum}}a_{n,m}\ z_1^n z_2^m=g_1(z_1, z_2)+g_2(z_1, z_2^{-1})+g_3(z_1^{-1}, z_2)+g_4(z_1^{-1}, z_2^{-1}),
	\end{equation*}
	where
	\begin{align*}
		& g_1(z_1, z_2)=\overset{\infty}{\underset{n=0}{\sum}}\ \overset{\infty}{\underset{m=0}{\sum}}a_{n,m} \ z_1^nz_2^m, \qquad \qquad  \qquad g_2(z_1, w_2)=\overset{\infty}{\underset{n=0}{\sum}}\ \overset{\infty}{\underset{m=1}{\sum}}a_{n,-m}z_1^nw_2^m, \\
		& g_3(w_1, z_2)=\overset{\infty}{\underset{n=1}{\sum}}\ \overset{\infty}{\underset{m=0}{\sum}}a_{-n,m} w_1^nz_2^m \qquad \text{and} \qquad g_4(w_1, w_2)=\overset{\infty}{\underset{n=1}{\sum}}\ \overset{\infty}{\underset{m=1}{\sum}}a_{-n,-m}w_1^nw_2^m.
	\end{align*}
	Then 
	\begin{align*}
		\frac{\|g_1\|_{\infty, r\mathbb{T} \times r\mathbb{T}}}{\|g\|_{\infty, \CA_r^2}} & \leq  \left[1+ \left(\frac{2}{\sqrt{r^4-1}}\right)+\left(\frac{1}{r^2-1}\right)^2\right], \ \
		\frac{\|g_2\|_{\infty, r\mathbb{T} \times r\mathbb{T}}}{\|g\|_{\infty, \CA_r^2}} \leq \left[1+\frac{1+r^2}{\sqrt{r^4-1}}+\frac{r^2}{(r^2-1)^2}\right], \\
		\frac{\|g_3\|_{\infty, r\mathbb{T} \times r\mathbb{T}}}{\|g\|_{\infty, \CA_r^2}}& \leq \left[1+\frac{1+r^2}{\sqrt{r^4-1}}+\frac{r^2}{(r^2-1)^2}\right] \ \ \ \text{and} 
		\ \ \ \frac{\|g_4\|_{\infty, r\mathbb{T} \times r\mathbb{T}}}{\|g\|_{\infty, \CA_r^2}} \leq \left[1+\frac{2r^2}{\sqrt{r^4-1}}+\left(\frac{r^2}{r^2-1}\right)^2 \ \right].
	\end{align*}	
\end{prop}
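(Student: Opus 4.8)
The plan is to prove the four estimates simultaneously, as instances of a single ``corner'' lemma. First I would record the only complex-analytic input that is needed: if $g$ is holomorphic on $\CA_r^2$ with $M:=\|g\|_{\infty,\CA_r^2}$, then applying the Cauchy integral formula on each of the circles $|z_i|=r$ and $|z_i|=r^{-1}$ gives both the coefficient bound $|a_{n,m}|\le M\,r^{-|n|-|m|}$ and the Parseval bounds $\sum_{n,m}|a_{n,m}|^2\rho_1^{2n}\rho_2^{2m}\le M^2$ for each choice $\rho_1,\rho_2\in\{r,r^{-1}\}$. The decisive one-variable observation is that on the outer circle $|z|=r$ a Laurent tail built from \emph{negative} powers is small: for $f(z)=\sum_p c_pz^p$ one has, by Cauchy--Schwarz against the inner-circle $L^2$ norm, $\big\|\sum_{p\le-1}c_pz^p\big\|_{\infty,r\mathbb{T}}\le \tfrac{1}{\sqrt{r^4-1}}\,\|f\|_{L^2(r^{-1}\mathbb{T})}$, while if the summation is allowed to include the constant mode (indices $p\le 0$) the same argument yields the larger factor $\tfrac{r^2}{\sqrt{r^4-1}}$, since $\sum_{k\ge0}r^{-4k}=\tfrac{r^4}{r^4-1}$ rather than $\sum_{k\ge1}r^{-4k}=\tfrac{1}{r^4-1}$. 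The cruder triangle-inequality version of the same estimates, using $|c_p|\le Mr^{-|p|}$, produces the factors $\tfrac{1}{r^2-1}$ and $\tfrac{r^2}{r^2-1}$ respectively.

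Next I would normalize each corner so that it becomes an ``analytic on the outer torus'' object. Replacing $z_i$ by $z_i^{-1}$ in those variables in which the corner carries negative powers is an automorphism of $\CA_r$, hence preserves $M$; after this change $g_1,g_2,g_3,g_4$ all become series supported in nonnegative powers of their natural variables, and the relevant quantity $\|g_j\|_{\infty,r\mathbb{T}\times r\mathbb{T}}$ is the supremum on the outer distinguished-boundary torus in those variables. In each variable the corner is then of one of two types: it retains the constant mode (``$\ge 0$'', as for the $z$-variables of $g_1$) or it starts at the first power (``$\ge 1$'', as for every inverted variable). The central computation is the inclusion--exclusion identity
\[
g_j=g-R_1g-R_2g+R_1R_2g,
\]
where $R_i$ is the projection onto the complementary Laurent tail in the $i$-th natural variable: strictly negative powers when the corner is of type $\ge0$ there, and all nonpositive powers (constant mode included) when it is of type $\ge1$.

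I would then estimate the three nontrivial terms on the outer torus. The term $\|g\|$ contributes $M$, i.e.\ the leading $1$. Each single correction $R_ig$ is handled by slicing in the $i$-th variable and applying the sharp Cauchy--Schwarz tail bound above, with the other variable frozen on its outer circle so that the sliced $L^2$ norm is dominated by $M$ (on the torus $r\mathbb{T}\times r^{-1}\mathbb{T}$ or $r^{-1}\mathbb{T}\times r\mathbb{T}$); this yields the cross factor $\kappa_i$, equal to $\tfrac{1}{\sqrt{r^4-1}}$ for a type-$\ge0$ variable and $\tfrac{r^2}{\sqrt{r^4-1}}$ for a type-$\ge1$ variable. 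The double correction $R_1R_2g$ has negative (or nonpositive) powers in \emph{both} variables, so it is already absolutely summable on the outer torus, and the triangle inequality with $|a_{n,m}|\le Mr^{-|n|-|m|}$ gives the product corner factor $\lambda_1\lambda_2$ with $\lambda_i\in\{\tfrac1{r^2-1},\tfrac{r^2}{r^2-1}\}$ according to the type. Summing gives $\|g_j\|_{\infty,r\mathbb{T}\times r\mathbb{T}}\le(1+\kappa_1+\kappa_2+\lambda_1\lambda_2)M$, and reading off the types of each corner ($g_1$: $\ge0,\ge0$; $g_2,g_3$: one of each; $g_4$: $\ge1,\ge1$) reproduces the four displayed inequalities verbatim.

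The substantive difficulty is entirely in the bookkeeping of the constant Laurent mode. It is precisely whether the zero mode is assigned to the target corner or to the correction that separates the factor $\tfrac{1}{\sqrt{r^4-1}}$ from $\tfrac{r^2}{\sqrt{r^4-1}}$ (and $\tfrac1{r^2-1}$ from $\tfrac{r^2}{r^2-1}$); an off-by-one in the index ranges, or bounding the single corrections by the triangle inequality instead of Cauchy--Schwarz, would give the correct order of magnitude but the wrong, weaker constants. The second point requiring care is that each single correction must be sliced in the correct variable and on the correct distinguished-boundary torus, so that the frozen-variable $L^2$ norm is genuinely controlled by $\|g\|_{\infty,\CA_r^2}$. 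This ``subtract the small tail'' device is essential rather than cosmetic, since the naive estimate that sums $|a_{n,m}|r^{n+m}$ directly over a corner diverges.
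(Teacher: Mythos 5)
Your proposal is correct and follows essentially the same route as the paper's proof: the same inclusion--exclusion decomposition of each corner into $g$ minus two one-variable Laurent tails plus a double tail, with the single tails controlled by Cauchy--Schwarz against the sliced $L^2$ norm on the inner circle and the double tail by the coefficient bounds $|a_{n,m}|\le \|g\|_{\infty,\CA_r^2}r^{-|n|-|m|}$, including the correct zero-mode bookkeeping that distinguishes $\tfrac{1}{\sqrt{r^4-1}}$ from $\tfrac{r^2}{\sqrt{r^4-1}}$. The only difference is organizational: you unify the four cases into a single ``corner lemma'' parametrized by type, whereas the paper carries out the four computations separately.
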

\begin{proof} 
Let $g$ be a holomorphic function on $\CA_r^2$ with the Laurent series representation given by
\begin{equation}\label{eqn_0301}
	g(z_1, z_2)=\overset{\infty}{\underset{n=-\infty}{\sum}}\ \overset{\infty}{\underset{m=-\infty}{\sum}}a_{n,m} \ z_1^nz_2^m \qquad \text{for} \  (z_1, z_2) \in \CA_r^2.
\end{equation}
The above series converges uniformly and absolutely on $\CA_r^2$ with the coefficients $a_{n,m}$ given by
\[
a_{n, m}=\frac{1}{(2\pi i)^2}\underset{|\zeta_1|=r_1}{\int} \ \underset{|\zeta_2|=r_2}{\int}\frac{g(\zeta_1, \zeta_2)}{\zeta_1^{n+1}\zeta_2^{m+1}}d\zeta_1 d\zeta_2 \quad (n, m \in \mathbb{Z}),
\]
where $r_1, r_2$ are scalars with $1\slash r \leq r_1, r_2 \leq r$. By Cauchy's estimate, $\displaystyle |a_{n, m}| \leq \frac{\|g\|_{\infty, \CA_r^2}}{r^{|n|+|m|}}$ for every $n, m \in \mathbb{Z}$. An interested reader is referred to Chapter II in \cite{Range} for further details. Then
\begin{align*}
	g(z_1, z_2)
	&=g_1(z_1, z_2)+g_2(z_1, z_2^{-1})+g_3(z_1^{-1}, z_2)+g_4(z_1^{-1}, z_2^{-1}),
\end{align*}
where 
\begin{align}\label{eqn_g1234}
	& g_1(z_1, z_2)=\overset{\infty}{\underset{n=0}{\sum}}\ \overset{\infty}{\underset{m=0}{\sum}}a_{n,m} \ z_1^nz_2^m, \quad \qquad g_2(z_1, w_2)=\overset{\infty}{\underset{n=0}{\sum}}\ \overset{\infty}{\underset{m=1}{\sum}}a_{n,-m}z_1^nw_2^m, \notag \\
	& g_3(w_1, z_2)=\overset{\infty}{\underset{n=1}{\sum}}\ \overset{\infty}{\underset{m=0}{\sum}}a_{-n,m} w_1^nz_2^m, \qquad g_4(w_1, w_2)=\overset{\infty}{\underset{n=1}{\sum}}\ \overset{\infty}{\underset{m=1}{\sum}}a_{-n,-m}w_1^nw_2^m.
\end{align}
We compute the estimates for $\|g_j\|_{\infty, r\mathbb{T} \times r\mathbb{T}}$ for $1 \leq j \leq 4$. The approach adopted here is motivated by the methods outlined in Section 6 of \cite{Shields}, where the author of \cite{Shields} studied the analytic structure of holomorphic functions on an annulus in the complex plane. From here onwards, the proof is divided into four steps for the better understanding of the readers.

\medskip 

\noindent \textbf{(1) An estimate for $\|g_1\|_{\infty, r\mathbb{T} \times r\mathbb{T}}$}. Let $(\xi_1, \xi_2)=(re^{i\theta}, re^{i\phi})$ for some $\theta, \phi \in \mathbb{R}$. Define
\[
b_{-n}=\overset{\infty}{\underset{m=-\infty}{\sum}}a_{-n,m} \xi_2^m \quad \text{and} \quad c_{-m}=\overset{\infty}{\underset{n=-\infty}{\sum}}a_{n, -m} \xi_1^n \quad (n, m \in \mathbb{Z}).
\] 
The Laurent series representations of the functions $g(., \xi_2)$ and $g(\xi_1, .)$ in $\CA_r$ are given by
\begin{align}\label{eqn_0302}
	g(z_1, \xi_2)=\overset{\infty}{\underset{n=-\infty}{\sum}}b_{n} z_1^n \qquad \text{and} \qquad 	g(\xi_1, z_2)=\overset{\infty}{\underset{m=-\infty}{\sum}}c_m z_2^m,
\end{align}
respectively. By Cauchy's estimate in one-variable, we have that
\begin{align}\label{eqn_0303}
|b_{-n}|= \left|  \overset{\infty}{\underset{m=-\infty}{\sum}}a_{-n,m} \xi_2^m\right| \leq \frac{\|g\|_{\infty, \CA_r^2}}{r^{|n|}} \quad \text{and} \quad |c_{-m}|=\left|  \overset{\infty}{\underset{n=-\infty}{\sum}}a_{n,-m} \xi_1^n\right| \leq \frac{\|g\|_{\infty, \CA_r^2}}{r^{|m|}}
\end{align}
for $n, m \in \mathbb{Z}$. It is not difficult to see that 
\begin{align*}
	\frac{1}{2\pi} \ \overset{2\pi}{\underset{0}{\int}}\left|g\left(\frac{1}{re^{i\zeta}}, \xi_2\right)\right|^2 d \zeta=\overset{\infty}{\underset{n=-\infty}{\sum}}|b_{-n}|^2r^{2n} \quad \text{and} \quad 	\frac{1}{2\pi} \ \overset{2\pi}{\underset{0}{\int}}\left|g\left(\xi_1, \frac{1}{re^{i\zeta}}\right)\right|^2 d\zeta=\overset{\infty}{\underset{m=-\infty}{\sum}}|c_{-m}|^2r^{2m}.
\end{align*}
Consequently, we have that
\begin{align}\label{eqn_03004}
	\left(\overset{\infty}{\underset{n=-\infty}{\sum}}|b_{-n}|^2r^{2n}\right)^{1\slash 2} \leq \|g\|_{\infty, \CA_r^2} \quad \text{and} \quad \left(\overset{\infty}{\underset{m=-\infty}{\sum}}|c_{-m}|^2r^{2m}\right)^{1\slash 2} \leq \|g\|_{\infty, \CA_r^2}.
\end{align}
Note that
\begin{align}\label{eqn_0304}
	\left|\overset{\infty}{\underset{n=0}{\sum}} \ \overset{\infty}{\underset{m=-\infty}{\sum}}a_{n,m} \xi_1^n \xi_2^m\right| = \left|g(\xi_1, \xi_2)-\overset{\infty}{\underset{n=1}{\sum}} \ \overset{\infty}{\underset{m=-\infty}{\sum}}a_{-n,m} \frac{\xi_2^m}{\xi_1^n}\right| \notag 
	&=\left|g(\xi_1, \xi_2)-\overset{\infty}{\underset{n=1}{\sum}} \  \frac{b_{-n}}{\xi_1^n}\right| \notag \\
	 &\leq \|g\|_{\infty, \CA_r^2}+\overset{\infty}{\underset{n=1}{\sum}}\frac{|b_{-n}|}{r^{n}} \notag \\	
	 & \leq \|g\|_{\infty, \CA_r^2}+ \sqrt{\left(\overset{\infty}{\underset{n=1}{\sum}}|b_{-n}|^2r^{2n}\right)\left(\overset{\infty}{\underset{n=1}{\sum}}\frac{1}{r^{4n}}\right)} \notag \\
	 & \leq \|g\|_{\infty, \CA_r^2}\left[1+ \left(\frac{1}{r^4-1}\right)^{1\slash 2}\right],
\end{align}
where the last inequality follows from \eqref{eqn_03004}. A few steps of simple calculations give that
\begin{align}\label{eqn_0307}
	\left|	\overset{\infty}{\underset{n=0}{\sum}} \ \overset{\infty}{\underset{m=1}{\sum}}\frac{a_{n,-m}}{\xi_2^m}\xi_1^n
	\right|
	&=\left|\overset{\infty}{\underset{n=-\infty}{\sum}} \ \overset{\infty}{\underset{m=1}{\sum}}\frac{a_{n,-m}}{\xi_2^m}\xi_1^n-\overset{-1}{\underset{n=-\infty}{\sum}} \ \overset{\infty}{\underset{m=1}{\sum}}\frac{a_{n,-m}}{\xi_2^m}\xi_1^n
	\right| \notag \\
	&\leq	\left|\overset{\infty}{\underset{m=1}{\sum}} \frac{c_{-m}}{\xi_2^m}\right|+\left|\overset{\infty}{\underset{n=1}{\sum}} \ \overset{\infty}{\underset{m=1}{\sum}}\frac{a_{-n,-m}}{\xi_1^n\xi_2^m}
	\right| \notag \\
	& \leq \overset{\infty}{\underset{m=1}{\sum}} \frac{|c_{-m}|}{r^m}+\overset{\infty}{\underset{n=1}{\sum}} \ \overset{\infty}{\underset{m=1}{\sum}}\frac{|a_{-n,-m}|}{r^n r^m}
 \notag \\
	& \leq 	\sqrt{\left(\overset{\infty}{\underset{m=1}{\sum}}|c_{-m}|^2r^{2m}\right)\left(\overset{\infty}{\underset{m=1}{\sum}}\frac{1}{r^{4m}}\right)} +\overset{\infty}{\underset{n=1}{\sum}} \ \overset{\infty}{\underset{m=1}{\sum}}\frac{\|g\|_{\infty, \CA_r^2}}{r^{2n}r^{2m}}  \qquad \left[\text{as} \ |a_{n,m}| \leq \frac{\|g\|_{\infty, \CA_r^2}}{r^{|n|+|m|}}\right] \notag \\
	&=\|g\|_{\infty, \CA_r^2}\left[\left(\frac{1}{r^4-1}\right)^{1\slash 2}+\left(\frac{1}{r^2-1}\right)^2\right],
\end{align}
where the last inequality follows from \eqref{eqn_03004}. Finally, we have by \eqref{eqn_0304} and \eqref{eqn_0307} that
\begin{align*}
	|g_1(\xi_1, \xi_2)|
	&=\left| \overset{\infty}{\underset{n=0}{\sum}}\ \overset{\infty}{\underset{m=-\infty}{\sum}}a_{n,m} \xi_1^n\xi_2^m-\overset{\infty}{\underset{n=0}{\sum}}  \ \overset{\infty}{\underset{m=1}{\sum}}\frac{a_{n,-m}}{\xi_2^m} \xi_1^n\right| 
	 \leq \|g\|_{\infty, \CA_r^2}\left[1+ \left(\frac{2}{\sqrt{r^4-1}}\right)+\left(\frac{1}{r^2-1}\right)^2\right] 
\end{align*}
and so, 
\begin{align}\label{eqn_0308}
	\|g_1\|_{\infty, r\mathbb{T} \times r\mathbb{T}} \leq \|g\|_{\infty, \CA_r^2} \left[1+ \left(\frac{2}{\sqrt{r^4-1}}\right)+\left(\frac{1}{r^2-1}\right)^2\right].
\end{align}

\medskip 

\noindent \textbf{(2) An estimate for $\|g_2\|_{\infty, r\mathbb{T} \times r\mathbb{T}}$}. Let $(\xi_1, \xi_2)=(re^{i\theta}, re^{i\phi})$ for some $\theta, \phi \in \mathbb{R}$. Define
\[
d_{-n}=\overset{\infty}{\underset{m=-\infty}{\sum}}\frac{a_{-n,m}}{\xi_2^m} \quad \text{and} \quad e_{-m}=\overset{\infty}{\underset{n=-\infty}{\sum}}a_{n, m} \xi_1^n \quad (n, m \in \mathbb{Z}).
\]  
The Laurent series representations of $g(., \xi_2^{-1})$ and $g(\xi_1, .)$ in $\CA_r$ are given by
\begin{align}\label{eqn_0309}
	g(z_1, \xi_2^{-1})
	=\overset{\infty}{\underset{n=-\infty}{\sum}}d_n{z_1}^n \quad  \text{and} \quad g(\xi_1, z_2)	=\overset{\infty}{\underset{m=-\infty}{\sum}}e_{-m}{z_2}^m,
\end{align}
respectively. By Cauchy's estimate in one-variable, we have that
\begin{align*}
	|d_{-n}|= \left|  \overset{\infty}{\underset{m=-\infty}{\sum}}\frac{a_{-n,m}} {\xi_2^m}\right| \leq \frac{\|g\|_{\infty, \CA_r^2}}{r^{|n|}} \quad \text{and} \quad |e_{-m}|=\left| \overset{\infty}{\underset{n=-\infty}{\sum}}a_{n,m} \xi_1^n\right| \leq \frac{\|g\|_{\infty, \CA_r^2}}{r^{|m|}}
\end{align*}
for $n, m \in \mathbb{Z}$. It is easy to see that
\begin{align*}
		\frac{1}{2\pi} \ \overset{2\pi}{\underset{0}{\int}}\left|g\left(\frac{1}{re^{i\zeta}}, \frac{1}{\xi_2}\right)\right|^2 d \zeta=\overset{\infty}{\underset{n=-\infty}{\sum}}|d_{-n}|^2r^{2n} \quad \text{and} \quad 	\frac{1}{2\pi} \ \overset{2\pi}{\underset{0}{\int}}\left|g\left(\xi_1, re^{i\zeta}\right)\right|^2 d\zeta=\overset{\infty}{\underset{m=-\infty}{\sum}}|e_{-m}|^2r^{2m}
\end{align*}
and so, we have
\begin{align}\label{eqn_0310}
		\left(\overset{\infty}{\underset{n=-\infty}{\sum}}|d_{-n}|^2r^{2n}\right)^{1\slash 2} \leq \|g\|_{\infty, \CA_r^2} \quad \text{and} \quad \left(\overset{\infty}{\underset{m=-\infty}{\sum}}|e_{-m}|^2r^{2m}\right)^{1\slash 2} \leq \|g\|_{\infty, \CA_r^2}.
\end{align}
Note that 
\begin{align}\label{eqn_0311}
	\left|\overset{\infty}{\underset{n=0}{\sum}} \  \overset{\infty}{\underset{m=-\infty}{\sum}}a_{n,-m} \xi_2^{m} \xi_1^n\right|
	&=	\left|\overset{\infty}{\underset{n=-\infty}{\sum}} \  \overset{\infty}{\underset{m=-\infty}{\sum}}\frac{a_{n,m}\xi_1^n}{\xi_2^{m}}-\overset{-1}{\underset{n=-\infty}{\sum}} \  \overset{\infty}{\underset{m=-\infty}{\sum}}\frac{a_{n,m}\xi_1^n}{\xi_2^{m}}\right| \notag \\
	&=\left|g\left(\xi_1, \frac{1}{\xi_2}\right)-\overset{\infty}{\underset{n=1}{\sum}} \  \overset{\infty}{\underset{m=-\infty}{\sum}}\frac{a_{-n,m}}{\xi_1^n\xi_2^{m}}\right| \notag \\
	& \leq \|g\|_{\infty, \CA_r^2}+\left|\overset{\infty}{\underset{n=1}{\sum}} \frac{d_{-n}}{\xi_1^n}\right| \notag \\
	&  \leq \|g\|_{\infty, \CA_r^2}+ \sqrt{\left(\overset{\infty}{\underset{n=1}{\sum}}|d_{-n}|^2r^{2n}\right)\left(\overset{\infty}{\underset{n=1}{\sum}}\frac{1}{r^{4n}}\right)} \notag \\
	& \leq \|g\|_{\infty, \CA_r^2}\left(1+ \frac{1}{\sqrt{r^4-1}}\right),
	\end{align}
	where the last inequality follows from \eqref{eqn_0310}. Also, we have
\begin{align}\label{eqn_0314}
	\left|	\overset{\infty}{\underset{n=0}{\sum}} \ \overset{\infty}{\underset{m=0}{\sum}}\frac{a_{n,m}\xi_1^n}{\xi_2^m}
	\right|
	&=\left|\overset{\infty}{\underset{m=0}{\sum}} \ \overset{\infty}{\underset{n=-\infty}{\sum}}\frac{a_{n,m}\xi_1^n}{\xi_2^m}-\overset{\infty}{\underset{m=0}{\sum}} \ \overset{-1}{\underset{n=-\infty}{\sum}}\frac{a_{n,m}\xi_1^n}{\xi_2^m}
	\right| \notag  \\
& \leq \left|\overset{\infty}{\underset{m=0}{\sum}} \frac{e_{-m}}{\xi_2^m}\right|+\left|\overset{\infty}{\underset{m=0}{\sum}} \ \overset{\infty}{\underset{n=1}{\sum}}\frac{a_{-n,m}}{\xi_1^n\xi_2^m}
\right| \notag \\
	& \leq 	\sqrt{\left(\overset{\infty}{\underset{m=0}{\sum}}|e_{-m}|^2r^{2m}\right)\left(\overset{\infty}{\underset{m=0}{\sum}}\frac{1}{r^{4m}}\right)} +\overset{\infty}{\underset{m=0}{\sum}} \ \overset{\infty}{\underset{n=1}{\sum}}\frac{\|g\|_{\infty, \CA_r^2}}{r^{2n}r^{2m}}  \qquad \left[\text{as} \ |a_{n,m}| \leq \frac{\|g\|_{\infty, \CA_r^2}}{r^{|n|+|m|}}\right] \notag \\
	&=\|g\|_{\infty, \CA_r^2}\left[\frac{r^2}{\sqrt{r^4-1}}+\frac{r^2}{(r^2-1)^2}\right].
\end{align}
Combining the estimates from \eqref{eqn_0311} and \eqref{eqn_0314}, we finally have that
\begin{align*}
	|g_2(\xi_1, \xi_2)|=\left| \overset{\infty}{\underset{n=0}{\sum}}\ \overset{\infty}{\underset{m=-\infty}{\sum}}a_{n, -m} \xi_1^n\xi_2^m-\overset{\infty}{\underset{n=0}{\sum}}   \ \overset{\infty}{\underset{m=0}{\sum}}\frac{a_{n,m}}{\xi_2^m} \xi_1^n\right| 
	\leq \|g\|_{\infty, \CA_r^2}\left[1+\frac{1+r^2}{\sqrt{r^4-1}}+\frac{r^2}{(r^2-1)^2}\right] 
\end{align*}
and so, 
\begin{align}\label{eqn_0315}
	\|g_2\|_{\infty, r\mathbb{T} \times r\mathbb{T}} \leq \|g\|_{\infty, \CA_r^2}\left[1+\frac{1+r^2}{\sqrt{r^4-1}}+\frac{r^2}{(r^2-1)^2}\right] \ .
\end{align}

\medskip 

\noindent \textbf{(3) An estimate for $\|g_3\|_{\infty, r\mathbb{T} \times r\mathbb{T}}$}. 
Let $(\xi_1, \xi_2)=(re^{i\theta}, re^{i\phi})$ for some $\theta, \phi \in \mathbb{R}$. Define
\[
\omega_{-n}=\overset{\infty}{\underset{m=-\infty}{\sum}}a_{n,m} \xi_2^m \quad \text{and} \quad \gamma_{-m}=\overset{\infty}{\underset{n=-\infty}{\sum}}\frac{a_{n, m}}{\xi_1^n} \quad (n, m \in \mathbb{Z}).
\] 
The Laurent series representations of the maps $g(., \xi_2)$ and $g(\xi_1^{-1}, .)$ in $\CA_r$ can be written as
\begin{align*}	
	g(z_1, \xi_2)
	=\overset{\infty}{\underset{n=-\infty}{\sum}}\omega_{-n}z_1^n \quad \text{and} \quad g(\xi_1^{-1}, z_2)
	=\overset{\infty}{\underset{m=-\infty}{\sum}}\gamma_{-m}z_2^m,
\end{align*}
respectively. It follows from Cauchy's estimate in one-variable that
\begin{align*}
	|\omega_{-n}|= \left|  \overset{\infty}{\underset{m=-\infty}{\sum}}a_{n,m} \xi_2^m\right| \leq \frac{\|g\|_{\infty, \CA_r^2}}{r^{|n|}} \quad \text{and} \quad |\gamma_{-m}|=\left| \overset{\infty}{\underset{n=-\infty}{\sum}}\frac{a_{n,m}}{ \xi_1^n}\right| \leq \frac{\|g\|_{\infty, \CA_r^2}}{r^{|m|}}
\end{align*}
for $n, m \in \mathbb{Z}$. Some simple calculations show that
\begin{align*}
	\frac{1}{2\pi} \ \overset{2\pi}{\underset{0}{\int}}\left|g\left( re^{i\zeta}, \xi_2\right)\right|^2 d\zeta=\overset{\infty}{\underset{n=-\infty}{\sum}}|\omega_{-n}|^2r^{2n} \quad \text{and} \quad 	\frac{1}{2\pi} \ \overset{2\pi}{\underset{0}{\int}}\left|g\left(\frac{1}{\xi_1}, re^{i\zeta} \right)\right|^2 d \zeta=\overset{\infty}{\underset{m=-\infty}{\sum}}|\gamma_{-m}|^2r^{2m}
\end{align*}
and so, we have
\begin{align}\label{eqn_3013}
	\left(\overset{\infty}{\underset{n=-\infty}{\sum}}|\omega_{-n}|^2r^{2n}\right)^{1\slash 2} \leq \|g\|_{\infty, \CA_r^2} \quad \text{and} \quad \left(\overset{\infty}{\underset{m=-\infty}{\sum}}|\gamma_{-m}|^2r^{2m}\right)^{1\slash 2} \leq \|g\|_{\infty, \CA_r^2}.
\end{align}
A routine computation gives that
\begin{align}\label{eqn_3014}
	\left|\overset{\infty}{\underset{m=0}{\sum}} \  \overset{\infty}{\underset{n=-\infty}{\sum}}a_{-n, m} \xi_1^{n} \xi_2^m\right|
	&=	\left|\overset{\infty}{\underset{n=-\infty}{\sum}} \  \overset{\infty}{\underset{m=-\infty}{\sum}}\frac{a_{n,m}\xi_2^m}{\xi_1^{n}}-\overset{-1}{\underset{m=-\infty}{\sum}} \  \overset{\infty}{\underset{n=-\infty}{\sum}}\frac{a_{n,m}\xi_2^m}{\xi_1^{n}}\right| \notag \\
	&=\left|g\left(\frac{1}{\xi_1}, \xi_2 \right)-\overset{\infty}{\underset{m=1}{\sum}} \frac{\gamma_{-m}}{\xi_2^m}\right| \notag \\
	& \leq \|g\|_{\infty, \CA_r^2}+\left|\overset{\infty}{\underset{m=1}{\sum}} \frac{\gamma_{-m}}{\xi_2^m}\right| \notag \\
	&  \leq \|g\|_{\infty, \CA_r^2}+ \sqrt{\left(\overset{\infty}{\underset{m=1}{\sum}}|\gamma_{-m}|^2r^{2m}\right)\left(\overset{\infty}{\underset{m=1}{\sum}}\frac{1}{r^{4m}}\right)} \notag \\
	& \leq \|g\|_{\infty, \CA_r^2}\left(1+ \frac{1}{\sqrt{r^4-1}}\right),
\end{align}
where the last inequality follows from \eqref{eqn_3013}. Again, some routine calculations give that
\begin{align}\label{eqn_3015}
	\left|	\overset{\infty}{\underset{n=0}{\sum}} \ \overset{\infty}{\underset{m=0}{\sum}}\frac{a_{n,m}\xi_2^m}{\xi_1^n}
	\right|
	& =\left|\overset{\infty}{\underset{n=0}{\sum}} \ \overset{\infty}{\underset{m=-\infty}{\sum}}\frac{a_{n,m}\xi_2^m}{\xi_1^n}-\overset{\infty}{\underset{n=0}{\sum}} \ \overset{\infty}{\underset{m=1}{\sum}}\frac{a_{n,-m}}{\xi_1^n\xi_2^m}
	\right| \notag  \\
	& \leq \left|\overset{\infty}{\underset{n=0}{\sum}} \frac{\omega_{-n}}{\xi_1^n}\right|+\left|\overset{\infty}{\underset{n=0}{\sum}} \ \overset{\infty}{\underset{m=1}{\sum}}\frac{a_{n, -m}}{\xi_1^n\xi_2^m}
	\right| \notag \\
	& \leq 	\sqrt{\left(\overset{\infty}{\underset{n=0}{\sum}}|\omega_{-n}|^2r^{2n}\right)\left(\overset{\infty}{\underset{n=0}{\sum}}\frac{1}{r^{4n}}\right)} +\overset{\infty}{\underset{n=0}{\sum}} \ \overset{\infty}{\underset{m=1}{\sum}}\frac{\|g\|_{\infty, \CA_r^2}}{r^{2n}r^{2m}}  \qquad \left[\text{as} \ |a_{n,m}| \leq \frac{\|g\|_{\infty, \CA_r^2}}{r^{|n|+|m|}}\right] \notag \\
	&=\|g\|_{\infty, \CA_r^2}\left[\frac{r^2}{\sqrt{r^4-1}}+\frac{r^2}{(r^2-1)^2}\right].
\end{align}
It now follows from \eqref{eqn_3014} and \eqref{eqn_3015} that
\begin{align*}
	|g_3(\xi_1, \xi_2)|=\left| \overset{\infty}{\underset{m=0}{\sum}} \  \overset{\infty}{\underset{n=-\infty}{\sum}}a_{-n, m} \xi_1^{n} \xi_2^m-\overset{\infty}{\underset{n=0}{\sum}} \ \overset{\infty}{\underset{m=0}{\sum}}\frac{a_{n,m}\xi_2^m}{\xi_1^n}\right| 
	\leq \|g\|_{\infty, \CA_r^2}\left[1+\frac{1+r^2}{\sqrt{r^4-1}}+\frac{r^2}{(r^2-1)^2}\right] 
\end{align*}
and so, 
\begin{align}\label{eqn_3016}
	\|g_3\|_{\infty, r\mathbb{T} \times r\mathbb{T}} \leq \|g\|_{\infty, \CA_r^2}\left[1+\frac{1+r^2}{\sqrt{r^4-1}}+\frac{r^2}{(r^2-1)^2}\right] \ .
\end{align}

\medskip 
\noindent \textbf{(4) An estimate for $\|g_4\|_{\infty, r\mathbb{T} \times r\mathbb{T}}$}. Let $(\xi_1, \xi_2)=(re^{i\theta}, re^{i\phi})$ for some $\theta, \phi \in \mathbb{R}$. Define
\[
\nu_{-n}=\overset{\infty}{\underset{m=-\infty}{\sum}}\frac{a_{n,m}}{\xi_2^m} \quad \text{and} \quad \mu_{-m}=\overset{\infty}{\underset{n=-\infty}{\sum}}a_{-n, m} \xi_1^n \quad (n, m \in \mathbb{Z}).
\]  
The Laurent series representations of the maps $g(., \xi_2^{-1})$ and $g(\xi_1^{-1}, .)$ in $\CA_r$ can be written as
\begin{align*}	
	g(z_1, \xi_2^{-1})
	=\overset{\infty}{\underset{n=-\infty}{\sum}}\nu_{-n}z_1^n \quad \text{and} \quad g(\xi_1^{-1}, z_2)
	=\overset{\infty}{\underset{m=-\infty}{\sum}}\mu_{-m}z_2^m,
\end{align*}
respectively. It follows from Cauchy's estimate in one-variable that
\begin{align*}
	|\nu_{-n}|= \left| \overset{\infty}{\underset{m=-\infty}{\sum}}\frac{a_{n,m}} {\xi_2^m}\right| \leq \frac{\|g\|_{\infty, \CA_r^2}}{r^{|n|}} \quad \text{and} \quad |\mu_{-m}|=\left| \overset{\infty}{\underset{n=-\infty}{\sum}}a_{-n,m}\xi_1^n\right| \leq \frac{\|g\|_{\infty, \CA_r^2}}{r^{|m|}}
\end{align*}
for $n, m \in \mathbb{Z}$. A simple calculation shows that
\begin{align*}
	\frac{1}{2\pi} \ \overset{2\pi}{\underset{0}{\int}}\left|g\left( re^{i\zeta}, \frac{1}{\xi_2}\right)\right|^2 d\zeta=\overset{\infty}{\underset{n=-\infty}{\sum}}|\nu_{-n}|^2r^{2n} \quad \text{and} \quad 	\frac{1}{2\pi} \ \overset{2\pi}{\underset{0}{\int}}\left|g\left(\frac{1}{\xi_1}, re^{i\zeta} \right)\right|^2 d \zeta=\overset{\infty}{\underset{m=-\infty}{\sum}}|\mu_{-m}|^2r^{2m}
\end{align*}
and so, we have
\begin{align}\label{eqn_3017}
	\left(\overset{\infty}{\underset{n=-\infty}{\sum}}|\nu_{-n}|^2r^{2n}\right)^{1\slash 2} \leq \|g\|_{\infty, \CA_r^2} \quad \text{and} \quad \left(\overset{\infty}{\underset{m=-\infty}{\sum}}|\mu_{-m}|^2r^{2m}\right)^{1\slash 2} \leq \|g\|_{\infty, \CA_r^2}.
\end{align}
A routine computation gives that
\begin{align}\label{eqn_3018}
	\left|\overset{\infty}{\underset{n=1}{\sum}} \  \overset{\infty}{\underset{m=-\infty}{\sum}}a_{-n, -m} \xi_1^{n} \xi_2^m\right|
	&=	\left|\overset{\infty}{\underset{n=-\infty}{\sum}} \  \overset{\infty}{\underset{m=-\infty}{\sum}}\frac{a_{n,m}}{\xi_1^{n}\xi_2^m}-\overset{\infty}{\underset{n=0}{\sum}} \  \left(\overset{\infty}{\underset{m=-\infty}{\sum}}\frac{a_{n,m}}{\xi_2^m}\right)\frac{1}{\xi_1^n}\right| \notag \\
	&=\left|g\left(\frac{1}{\xi_1}, \frac{1}{\xi_2} \right)-\overset{\infty}{\underset{n=0}{\sum}} \frac{\nu_{-n}}{\xi_1^n}\right| \notag \\
	& \leq \|g\|_{\infty, \CA_r^2}+\left|\overset{\infty}{\underset{n=0}{\sum}} \frac{\nu_{-n}}{\xi_1^n}\right| \notag \\
	&  \leq \|g\|_{\infty, \CA_r^2}+ \sqrt{\left(\overset{\infty}{\underset{n=0}{\sum}}|\nu_{-n}|^2r^{2n}\right)\left(\overset{\infty}{\underset{n=0}{\sum}}\frac{1}{r^{4n}}\right)} \notag \\
	& \leq \|g\|_{\infty, \CA_r^2}\left(1+ \frac{r^2}{\sqrt{r^4-1}}\right),
\end{align}
where the last inequality follows from \eqref{eqn_3017}. Again, some simple calculations give that
\begin{align}\label{eqn_3019}
	\left|	\overset{\infty}{\underset{n=1}{\sum}} \ \overset{\infty}{\underset{m=0}{\sum}}\frac{a_{-n,m}\xi_1^n}{\xi_2^m}
	\right|
	& =\left|\overset{\infty}{\underset{n=-\infty}{\sum}} \ \overset{\infty}{\underset{m=0}{\sum}}\frac{a_{-n,m}\xi_1^n}{\xi_2^m}-\overset{\infty}{\underset{n=0}{\sum}} \ \overset{\infty}{\underset{m=0}{\sum}}\frac{a_{n,m}}{\xi_1^n\xi_2^m}
	\right| \notag  \\
	& \leq \left|\overset{\infty}{\underset{m=0}{\sum}} \frac{\mu_{-m}}{\xi_2^m}\right|+\left|\overset{\infty}{\underset{n=0}{\sum}} \ \overset{\infty}{\underset{m=0}{\sum}}\frac{a_{n, -m}}{\xi_1^n\xi_2^m}
	\right| \notag \\
	& \leq 	\sqrt{\left(\overset{\infty}{\underset{m=0}{\sum}}|\mu_{-m}|^2r^{2m}\right)\left(\overset{\infty}{\underset{m=0}{\sum}}\frac{1}{r^{4m}}\right)} +\overset{\infty}{\underset{n=0}{\sum}} \ \overset{\infty}{\underset{m=0}{\sum}}\frac{\|g\|_{\infty, \CA_r^2}}{r^{2n}r^{2m}}  \qquad \left[\text{as} \ |a_{n,m}| \leq \frac{\|g\|_{\infty, \CA_r^2}}{r^{|n|+|m|}}\right] \notag \\
	&=\|g\|_{\infty, \CA_r^2}\left[\frac{r^2}{\sqrt{r^4-1}}+\left(\frac{r^2}{r^2-1}\right)^2 \ \right],
\end{align}
where the inequality follows from \eqref{eqn_3017}. It now follows from \eqref{eqn_3018} and \eqref{eqn_3019} that
\begin{align*}
	|g_4(\xi_1, \xi_2)|=\left|\overset{\infty}{\underset{n=1}{\sum}} \  \overset{\infty}{\underset{m=-\infty}{\sum}}a_{-n, -m} \xi_1^{n} \xi_2^m-\overset{\infty}{\underset{n=1}{\sum}} \ \overset{\infty}{\underset{m=0}{\sum}}\frac{a_{-n,m}\xi_1^n}{\xi_2^m}\right| 
	\leq \|g\|_{\infty, \CA_r^2}\left[1+\frac{2r^2}{\sqrt{r^4-1}}+\left(\frac{r^2}{r^2-1}\right)^2 \ \right] 
\end{align*}
and so, 
\begin{align}\label{eqn_3020}
	\|g_4\|_{\infty, r\mathbb{T} \times r\mathbb{T}} \leq \|g\|_{\infty, \CA_r^2}\left[1+\frac{2r^2}{\sqrt{r^4-1}}+\left(\frac{r^2}{r^2-1}\right)^2 \ \right] \ .
\end{align}
Combining \eqref{eqn_0308}, \eqref{eqn_0315}, \eqref{eqn_3016} and \eqref{eqn_3020}, the desired conclusion follows.
\end{proof}

Proposition \ref{prop_estimate} can be viewed as a two-variable analog of the classical one-variable decomposition of a holomorphic function on $\overline{\A}_r$ into its analytic and principal parts, together with the corresponding supremum norm estimates for each part. We are now in a position to present the main result of this section.

\begin{thm}\label{thm_pair}
	Let $(T_1, T_2)$ be a commuting pair of operators in $Q\A_r$. Then
	\[
	\|g(T_1, T_2)\| \leq \left[4+4\left(\frac{r^2+1}{r^2-1}\right)^{1\slash 2}+\left(\frac{r^2+1}{r^2-1}\right)^2 \ \right] \|g\|_{\infty, \CA_r^2} 
	\]
	for all $g \in \text{Rat}(\CA_r^2)$.
\end{thm}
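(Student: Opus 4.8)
The plan is to run the same single-variable strategy componentwise, replacing the dilation step by Ando's inequality. First I would decompose $g$ exactly as in Proposition \ref{prop_estimate}, writing
\[
g(z_1,z_2)=g_1(z_1,z_2)+g_2(z_1,z_2^{-1})+g_3(z_1^{-1},z_2)+g_4(z_1^{-1},z_2^{-1}),
\]
where $g_1$ collects the terms with nonnegative powers of both variables, $g_4$ those with nonpositive powers of both, and $g_2,g_3$ the two mixed quadrants. Since $T_1,T_2$ commute, so do $T_1,T_2,T_1^{-1},T_2^{-1}$ pairwise, and evaluating the decomposition at $(T_1,T_2)$ gives $g(T_1,T_2)=g_1(T_1,T_2)+g_2(T_1,T_2^{-1})+g_3(T_1^{-1},T_2)+g_4(T_1^{-1},T_2^{-1})$. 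The triangle inequality then reduces the problem to bounding each of the four summands.

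The key observation is that each summand is a holomorphic function of a commuting pair of contractions. Because $T_i\in Q\A_r$ forces $\|T_i/r\|,\|T_i^{-1}/r\|\leq 1$, each of $(T_1/r,T_2/r)$, $(T_1/r,T_2^{-1}/r)$, $(T_1^{-1}/r,T_2/r)$ and $(T_1^{-1}/r,T_2^{-1}/r)$ is a commuting pair of contractions. For the first piece I would set $\tilde g_1(w_1,w_2)=g_1(rw_1,rw_2)$ so that $g_1(T_1,T_2)=\tilde g_1(T_1/r,T_2/r)$; Ando's inequality \cite{Ando} then yields $\|g_1(T_1,T_2)\|\leq\|\tilde g_1\|_{\infty,\T^2}=\|g_1\|_{\infty,r\T\times r\T}$. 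Applying the same rescaling to the appropriate contraction pair gives $\|g_2(T_1,T_2^{-1})\|\leq\|g_2\|_{\infty,r\T\times r\T}$, and similarly for $g_3$ and $g_4$, so that $\|g(T_1,T_2)\|\leq\sum_{j=1}^{4}\|g_j\|_{\infty,r\T\times r\T}$.

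The step requiring the most care, and the one I expect to be the main obstacle, is the rigorous application of Ando's inequality, since it is stated for $\text{Rat}(\overline{\D}^2)$ while the $g_j$ are a priori infinite series. Here I would exploit that $g\in\text{Rat}(\CA_r^2)$ is holomorphic on a neighbourhood $\{r^{-1}-\varepsilon<|z_i|<r+\varepsilon\}$ of the closed biannulus; sharpening the Cauchy estimate to these radii shows that each $g_j$ is holomorphic on a neighbourhood of the corresponding closed polydisc of radius $r$, hence each $\tilde g_j$ is holomorphic on a neighbourhood of $\overline{\D}^2$. Such a function is the uniform limit on $\overline{\D}^2$ of its Taylor polynomials, so applying Ando's inequality to those polynomials and passing to the limit justifies the four bounds; the maximum modulus principle identifies the closed-polydisc supremum with the distinguished-boundary supremum $\|g_j\|_{\infty,r\T\times r\T}$, which is precisely the quantity estimated in Proposition \ref{prop_estimate}.

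Finally I would combine the four estimates of Proposition \ref{prop_estimate} by simple arithmetic. The four constant terms sum to $4$. Writing $r^4-1=(r^2-1)(r^2+1)$, the four terms with denominator $\sqrt{r^4-1}$ have numerators $2,\,1+r^2,\,1+r^2,\,2r^2$, which sum to $4(1+r^2)$, giving $\tfrac{4(1+r^2)}{\sqrt{r^4-1}}=4\big(\tfrac{r^2+1}{r^2-1}\big)^{1/2}$. Over the common denominator $(r^2-1)^2$, the remaining four terms have numerators $1,\,r^2,\,r^2,\,r^4$, summing to $1+2r^2+r^4=(r^2+1)^2$, giving $\big(\tfrac{r^2+1}{r^2-1}\big)^2$. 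Adding the three contributions produces the constant $4+4\big(\tfrac{r^2+1}{r^2-1}\big)^{1/2}+\big(\tfrac{r^2+1}{r^2-1}\big)^2$, which is exactly the claimed bound, completing the argument.
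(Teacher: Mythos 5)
Your proposal is correct and follows essentially the same route as the paper: the Laurent decomposition into the four quadrant pieces $g_1,\dotsc,g_4$, Ando's inequality applied to the four rescaled commuting contraction pairs, the estimates of Proposition \ref{prop_estimate}, and the same arithmetic identity $\frac{r^2+1}{\sqrt{r^4-1}}=\left(\frac{r^2+1}{r^2-1}\right)^{1/2}$ to assemble the constant. The density/approximation care you take in applying Ando's inequality to the series $g_j$ is a welcome extra detail that the paper leaves implicit.
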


\begin{proof}
	Let $g \in \text{Rat}(\CA_r^2)$. We can represent $g$ in the Laurent series form given by
	\[
	g(z_1, z_2)=\overset{\infty}{\underset{n=-\infty}{\sum}}\ \overset{\infty}{\underset{m=-\infty}{\sum}}a_{n,m}\ z_1^n z_2^m=g_1(z_1, z_2)+g_2(z_1, z_2^{-1})+g_3(z_1^{-1}, z_2)+g_4(z_1^{-1}, z_2^{-1}),
	\]
	where $g_1, g_2, g_3, g_4$ are given as in Proposition \ref{prop_estimate}. Since $g$ is a holomorphic map on $\CA_r^2$, the functions $g_1, g_2, g_3$ and $g_4$ are holomorphic functions on the compact set $r\overline{\mathbb{D}} \times r\overline{\mathbb{D}}$. Moreover, $\|T_1\|, \|T_2\|, \|T_1^{-1}\|, \|T_2^{-1}\| \leq r$ implies that $\sigma_T(T_1, T_2), \sigma_T(T_1, T_2^{-1}), \sigma_T(T_1^{-1}, T_2)$ and $\sigma_T(T_1^{-1}, T_2^{-1})$ are contained in $r\overline{\mathbb{D}} \times r\overline{\mathbb{D}}$. So, we can write
	\[
	g(T_1, T_2)=g_1(T_1, T_2)+g_2(T_1, T_2^{-1})+g_3(T_1^{-1}, T_2)+g_4(T_1^{-1}, T_2^{-1}).
	\] 
	An application of Ando's inequality to $(T_1, T_2), (T_1, T_2^{-1}), (T_1^{-1}, T_2)$ and $(T_1^{-1}, T_2^{-1})$ gives that
	\begin{align*}
		\|g(T_1, T_2)\| 
		& \leq \|g_1(T_1, T_2)\|+\|g_2(T_1, T_2^{-1})\|+\|g_3(T_1^{-1}, T_2)\|+\|g_4(T_1^{-1}, T_2^{-1})\| \\
		& \leq \|g_1\|_{\infty, r\mathbb{T} \times r\mathbb{T}}+\|g_2\|_{\infty, r\mathbb{T} \times r\mathbb{T}}+\|g_3\|_{\infty, r\mathbb{T} \times r\mathbb{T}}+\|g_4\|_{\infty, r\mathbb{T} \times r\mathbb{T}}\\
		& \leq \left[4+\frac{4(r^2+1)}{\sqrt{r^4-1}}+\frac{1+2r^2+r^4}{(r^2-1)^2} \ \right] \|g\|_{\infty, \CA_r^2} \quad [\text{by Proposition \ref{prop_estimate}}]\\
		&= \left[4+4\left(\frac{r^2+1}{r^2-1}\right)^{1\slash 2}+\left(\frac{r^2+1}{r^2-1}\right)^2 \ \right] \|g\|_{\infty, \CA_r^2}.
	\end{align*}	
	The proof is now complete.
\end{proof}

\section{Polyannulus as a $K$-spectral set for doubly commuting operators in $Q\A_r$}\label{sec_polyannulus}

\noindent Needless to mention, the technique adopted to obtain Theorem \ref{thm_pair} crucially depends  on the dilation of commuting pairs of contractions to commuting pairs of unitaries. For more than two commuting contractions, no such dilation theorem holds in general. One particular class for which the dilation theorem holds is the class of doubly commuting contractions. In this section, we show that the polyannulus $\CA_r^n$ is a $K_n$-spectral set for doubly commuting operators in $Q\A_r$, where 
\begin{align}\label{eqn_401}
K_n=\left(\frac{3r^2-1}{r^2-1}\right)^n.
\end{align}
The key ingredient in the proof of this result is the dilation theorem for doubly commuting contractions. It is a well-known fact that for a doubly commuting tuple of contractions $(T_1, \dotsc, T_n)$ acting on a Hilbert space $\HS$, there exists a commuting tuple of unitaries $(U_1, \dotsc, U_n)$ on a Hilbert space $\mathcal{K}$ containing $\HS$ such that 
$
g(T_1, \dotsc, T_n)=P_{\HS}g(U_1, \dotsc, U_n)|_{\HS}
$
for all $g \in \text{Rat}(\overline{\mathbb{D}}^n)$. It now follows from the spectral mapping principle that
\[
\|g(T_1, \dotsc, T_n)\| \leq \|g(U_1, \dotsc, U_n)\| =\|g\|_{\infty, \sigma_T(U_1, \dotsc, U_n)} \leq \|g\|_{\infty, \mathbb{T}^n}
\]
for all $g \in \text{Rat}(\overline{\mathbb{D}}^n)$. The above inequality is commonly referred to as the von Neumann's inequality in the multivariable setting. We refer an interested reader to Section 9 of Chapter I in \cite{NagyFoias6} for the success of dilation for a doubly commuting family of contractions. We now prove the following proposition, which plays a central role in establishing the main result of this section.

\begin{prop}\label{prop_401}
	Let $g$ be a holomorphic function on the polyannulus $\CA_r^n$. Then we have a decomposition of $g$ into $2^n$ functions given by
	\begin{align}\label{eqn_g2n} 
	g(z_1, \dotsc, z_n)=\underset{\mu}{\sum} \ g_{\mu}\left(z_1^{\mu(1)}, \dotsc, z_n^{\mu(n)}\right) \qquad \text{on \ $\CA_r^n$},
	\end{align}
where the sum varies over all functions $\mu: \{1, \dotsc, n\} \to \{1, -1\}$. Moreover, each $g_{\mu}$ is a holomorphic function on the polydisc $(r\overline{\mathbb{D}})^n$ and 
\begin{align}\label{eqn_g2n_esti}
\|g_{\mu}\|_{\infty, (r\mathbb{T})^n} \leq
\left(\frac{r^2}{r^2-1}\right)^{t}\left(\frac{2r^2-1}{r^2-1}\right)^{n-t}\|g\|_{\infty, \CA_r^n},
\end{align}
where $t \in \{0, \dotsc, n\}$ is the cardinality of the set $\mu^{-1}(\{1\})$.
\end{prop}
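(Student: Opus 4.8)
The plan is to reduce the whole statement to a one-variable estimate that is then applied iteratively, one coordinate at a time. First I would record the multivariable Laurent expansion $g(z_1,\dots,z_n)=\sum_{k\in\Z^n}a_k z^k$, which converges absolutely and uniformly on $\CA_r^n$, and note that the Cauchy integral formula on the distinguished boundary gives the coefficient bound $|a_k|\le \|g\|_{\infty,\CA_r^n}\,r^{-(|k_1|+\cdots+|k_n|)}$. The decomposition \eqref{eqn_g2n} is then obtained by partitioning $\Z^n$ according to the sign pattern of the exponent vector, assigning each coordinate with $k_i\ge 0$ to $\mu(i)=1$ and each $k_i<0$ to $\mu(i)=-1$, so that $g_\mu(y_1,\dots,y_n)=\sum a_{\mu(1)\ell_1,\dots,\mu(n)\ell_n}\,y_1^{\ell_1}\cdots y_n^{\ell_n}$, the sum running over $\ell_i\ge 0$ when $\mu(i)=1$ and $\ell_i\ge 1$ when $\mu(i)=-1$. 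The coefficient bound forces the polyradius of convergence to be at least $r$ in each variable, which yields the holomorphy of every $g_\mu$ on $(r\overline{\D})^n$.

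The analytic core is a single-variable lemma, which I would state in a form that survives the presence of frozen parameters. For $G(z)=\sum_{k\in\Z}a_kz^k$ holomorphic on $\CA_r$, form the power series $\tilde G^+(y)=\sum_{k\ge0}a_ky^k$ and $\tilde G^-(y)=\sum_{k\ge1}a_{-k}y^k$. The essential point is that $\tilde G^{\pm}$ cannot be estimated on $|y|=r$ by summing the Cauchy bounds termwise, since that sum diverges; instead I would use the subtraction identities $\tilde G^+(y)=G(y)-\sum_{k\ge1}a_{-k}y^{-k}$ and $\tilde G^-(y)=G(1/y)-\sum_{k\ge0}a_ky^{-k}$, valid for $|y|=r$. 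Since both $y$ and $1/y$ lie in $\CA_r$ when $|y|=r$, the first term on each right-hand side is bounded by $\|G\|_{\infty,\CA_r}$, while the remaining tails are geometric and controlled by the Cauchy bound. This gives $\sup_{|y|=r}|\tilde G^+|\le \frac{r^2}{r^2-1}\|G\|_{\infty,\CA_r}$ and $\sup_{|y|=r}|\tilde G^-|\le \frac{2r^2-1}{r^2-1}\|G\|_{\infty,\CA_r}$; set $c_+=\frac{r^2}{r^2-1}$ and $c_-=\frac{2r^2-1}{r^2-1}$.

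With the lemma in hand I would peel off the variables $z_n,z_{n-1},\dots,z_1$ in succession. Applying it in $z_n$, with $(z_1,\dots,z_{n-1})\in\CA_r^{n-1}$ frozen as parameters, extracts the $\mu(n)$-part of $g$ as a function that is holomorphic in the surviving variables for each fixed $y_n$ with $|y_n|=r$, and whose supremum over $\CA_r^{n-1}\times\{|y_n|=r\}$ is at most $c_{\mu(n)}\|g\|_{\infty,\CA_r^n}$. The crucial bookkeeping observation is that this intermediate object still has its \emph{full} annulus norm in each remaining variable bounded by the same quantity, so the lemma reapplies verbatim in $z_{n-1}$, then $z_{n-2}$, and so on, the constants multiplying at each stage. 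After $n$ steps the procedure reconstructs exactly $g_\mu$ and yields $\|g_\mu\|_{\infty,(r\mathbb{T})^n}\le \prod_{i=1}^n c_{\mu(i)}=c_+^{\,t}c_-^{\,n-t}$, which is \eqref{eqn_g2n_esti}.

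The step I expect to demand the most care is precisely this iteration. At each stage one must verify that, after taking the sign part in one variable and placing that variable on the circle $|y|=r$, the resulting function is genuinely holomorphic in the remaining variables over the full polyannulus and that its supremum there is still bounded by the accumulated constant times $\|g\|_{\infty,\CA_r^n}$; only under this invariant does the single-variable lemma apply again with the entire annulus as its domain in the next variable. Once this invariant is confirmed, the remainder is simply organizing the geometric sums that produce $c_+$ and $c_-$.
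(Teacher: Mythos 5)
Your proposal is correct and follows essentially the same route as the paper's proof: the sign-pattern Laurent decomposition, the Cauchy coefficient bounds $|a_k|\le \|g\|_{\infty,\CA_r^n}r^{-(|k_1|+\cdots+|k_n|)}$, the subtraction identities producing the one-variable constants $r^2/(r^2-1)$ and $(2r^2-1)/(r^2-1)$, and an induction that peels off one variable at a time. The only difference is organizational — you apply the one-variable lemma coordinate by coordinate with the remaining variables frozen on $r\mathbb{T}$, whereas the paper writes each $n$-variable sign-part as a full-Laurent piece plus a geometric tail and applies the $(n-1)$-variable case to both — but the invariant you identify is exactly what the paper's induction uses, and the resulting constants coincide.
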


\begin{proof} The argument follows techniques similar to the proof of Proposition \ref{prop_estimate}. As in the biannulus case, we decompose the Laurent series of a holomorphic function on $\CA_r^n$ into $2^n$ parts. The estimates for these components are then obtained through repeated applications of the one-variable Cauchy estimates. However, Proposition \ref{prop_estimate} is based on methods due to Shields \cite{Shields}. The higher-dimensional setting requires an iterative scheme, leading to estimates different from those in Proposition \ref{prop_estimate} when $n=2$. Let $g$ be a holomorphic function on $\CA_r^n$. The Laurent series representation of $g$ is given by
	\begin{align}\label{eqn_gsplit1}
	g(z_1, \dotsc, z_n)=\overset{\infty}{\underset{\nu_1=-\infty}{\sum}}\dotsc \overset{\infty}{\underset{\nu_n=-\infty}{\sum}}a_{\nu_1,\dotsc, \nu_n}z_1^{\nu_1}\dotsc z_n^{\nu_n}
	\end{align}
for $(z_1, \dotsc, z_n) \in \CA_r^n$. The coefficients $a_{\nu_1, \dotsc, \nu_n}$ are given by
\[
a_{\nu_1, \dotsc, \nu_n}=\frac{1}{(2\pi i)^n}\int_{|\zeta_1|=\rho_1}\dotsc \int_{|\zeta_n|=\rho_n} \ \frac{g(\zeta_1, \dotsc, \zeta_n)}{\zeta_1^{\nu_1+1}\dotsc \zeta_n^{\nu_n+1}} \ d\zeta_1\dotsc d\zeta_n,
\]
where $\rho_1, \dotsc, \rho_n$ are scalars between $1\slash r$ and  $r$ (see Chapter II in \cite{Range}). It is evident that
\[
\left|a_{\nu_1, \dotsc, \nu_n}\right| \leq \frac{\|g\|_{\infty, \CA_r^n}}{r^{|\nu_1|+\dotsc +|\nu_n|}} \qquad \text{for all \ $\nu_1, \dotsc, \nu_n \in \mathbb{Z}$}.
\]
We can split $g(z_1, \dotsc, z_n)$ as in \eqref{eqn_gsplit1} into $2^n$ parts depending on non-negativity or negativity of the exponents $\nu_1, \dotsc, \nu_n$ of the monomials $z_1, \dotsc, z_n$, respectively. In other words, we have that
	\begin{align*}
	g(z_1, \dotsc, z_n)=\underset{\mu}{\sum} \ g_{\mu}\left(z_1^{\mu(1)}, \dotsc, z_n^{\mu(n)}\right) \qquad \text{on \ $\CA_r^n$},
\end{align*}
where the sum varies over all functions $\mu: \{1, \dotsc, n\} \to \{1, -1\}$. The functions $g_{\mu}$ are given by 
\[
g_\mu(w_1, \dotsc, w_n)=\overset{\infty}{\underset{\nu_1=\sigma(1)}{\sum}}\dotsc \overset{\infty}{\underset{\nu_n=\sigma(n)}{\sum}}a_{\mu(1)\nu_1,\dotsc, \mu(n)\nu_n}w_1^{\nu_1}\dotsc w_n^{\nu_n}, \ \  \text{where} \ \ 
\sigma(k)= \left\{
\begin{array}{ll}
	0 & \mbox{ if } \; \mu(k)=1 \\
	1 &  \mbox{ if} \; \mu(k)=-1 \\
\end{array} 
\right.
\]
for $1 \leq k \leq n$. It is clear that the above series converges uniformly and absolutely on the polydisc $(r\overline{\mathbb{D}})^n$. Thus, each $g_\mu$ defines a holomorphic function on $(r\overline{\mathbb{D}})^n$. It only remains to show the inequality as in \eqref{eqn_g2n_esti}. To make the algorithm clear to the readers, we first discuss the cases when $n=1, 2$ and then, building on $n=2$ case, we prove the desired conclusion for $n=3$. The general induction method follows the same techniques. 

\medskip 	
	
\noindent \textbf{Case $n=1$.} For a holomorphic function $g$ on $\CA_r$, the Laurent series representation of $g$ is given by 
\[
g(z)=\overset{\infty}{\underset{n=-\infty}{\sum}}a_nz^n=g_1(z)+g_2(1\slash z), \quad \text{where} \qquad g_1(z)=\overset{\infty}{\underset{n=0}{\sum}}a_nz^n \quad \text{and} \quad  g_2(z)=\overset{\infty}{\underset{n=1}{\sum}}a_{-n}z^n 
\]
for $z \in \CA_r$. Let $\xi=re^{i\theta}$ for $\theta \in \mathbb{R}$. By Cauchy's estimate in one-variable, we have that
\begin{align}\label{eqn_402}
\left|g_1(\xi)\right|=\left|g(\xi)-\overset{\infty}{\underset{n=1}{\sum}}\frac{a_{-n}}{\xi^n}\right| \leq \|g\|_{\infty, \CA_r}+\overset{\infty}{\underset{n=1}{\sum}}\frac{|a_{-n}|}{r^n} \leq \|g\|_{\infty, \CA_r}+\overset{\infty}{\underset{n=1}{\sum}}\frac{\|g\|_{\infty, \CA_r}}{r^{2n}}=\|g\|_{\infty, \CA_r}\left(\frac{r^2}{r^2-1}\right)  \notag \\ 
\end{align}
and
\begin{align}\label{eqn_403}
	\left|g_2(\xi)\right|=\left|g(1\slash \xi)-\overset{\infty}{\underset{n=0}{\sum}}\frac{a_{n}}{\xi^n}\right| \leq \|g\|_{\infty, \CA_r}+\overset{\infty}{\underset{n=0}{\sum}}\frac{|a_{n}|}{r^n} \leq \|g\|_{\infty, \CA_r}+\overset{\infty}{\underset{n=0}{\sum}}\frac{\|g\|_{\infty, \CA_r}}{r^{2n}}=\|g\|_{\infty, \CA_r}\left(\frac{2r^2-1}{r^2-1}\right). \notag \\
\end{align}

\noindent \textbf{Case $n=2$.} Let $g(z_1, z_2)$ be a holomorphic function on the biannulus $\CA_r^2=\CA_r \times \CA_r$. Following the discussion in the beginning of Section \ref{sec_biannulus}, we can write 
\begin{align*}
g(z_1, z_2)
=g_1(z_1, z_2)+g_2(z_1, z_2^{-1})+g_3(z_1^{-1}, z_2)+g_4(z_1^{-1}, z_2^{-1}),
\end{align*}
where 
\begin{align*}
	& g_1(z_1, z_2)=\overset{\infty}{\underset{n=0}{\sum}}\ \overset{\infty}{\underset{m=0}{\sum}}a_{n,m} \ z_1^nz_2^m, \quad \qquad g_2(z_1, w_2)=\overset{\infty}{\underset{n=0}{\sum}}\ \overset{\infty}{\underset{m=1}{\sum}}a_{n,-m}z_1^nw_2^m \\
	& g_3(w_1, z_2)=\overset{\infty}{\underset{n=1}{\sum}}\ \overset{\infty}{\underset{m=0}{\sum}}a_{-n,m} w_1^nz_2^m, \qquad g_4(w_1, w_2)=\overset{\infty}{\underset{n=1}{\sum}}\ \overset{\infty}{\underset{m=1}{\sum}}a_{-n,-m}w_1^nw_2^m.
\end{align*}
We compute the bounds for $\|g_j\|_{\infty, r\mathbb{T} \times r\mathbb{T}}$ for $1 \leq j \leq 4$ using Cauchy's estimates.

\medskip 

\noindent \textbf{(1) An estimate for $\|g_1\|_{\infty, r\mathbb{T} \times r\mathbb{T}}$}. Let $(\xi_1, \xi_2)=(re^{i\theta}, re^{i\phi})$ for some $\theta, \phi \in \mathbb{R}$.  Note that
\begin{align}\label{eqn_302}
	g(z_1, \xi_2)=\overset{\infty}{\underset{n=0}{\sum}}\left[ \overset{\infty}{\underset{m=-\infty}{\sum}}a_{n,m} \xi_2^m\right] z_1^n+\overset{\infty}{\underset{n=1}{\sum}}\left[ \overset{\infty}{\underset{m=-\infty}{\sum}}a_{-n,m} \xi_2^m\right] \frac{1}{z_1^n},
\end{align}
which is a Laurent series representation of $g(., \xi_2)$ in $\CA_r$. By Cauchy's estimate in one-variable, we have that
\begin{align}\label{eqn_303}
\left| \overset{\infty}{\underset{m=-\infty}{\sum}}a_{n,m} \xi_2^m\right| \leq \frac{\|g\|_{\infty, \CA_r^2}}{r^n},
\qquad  \left|  \overset{\infty}{\underset{m=-\infty}{\sum}}a_{-n,m} \xi_2^m\right| \leq \frac{\|g\|_{\infty, \CA_r^2}}{r^n}
\end{align}
and thus, we have by \eqref{eqn_302} \& \eqref{eqn_303} that
\begin{align}\label{eqn_304}
\left|	\overset{\infty}{\underset{n=0}{\sum}} \ \overset{\infty}{\underset{m=-\infty}{\sum}}a_{n,m} \xi_1^n \xi_2^m\right|
&= \left|g(\xi_1, \xi_2)-\overset{\infty}{\underset{n=1}{\sum}} \ \overset{\infty}{\underset{m=-\infty}{\sum}}a_{-n,m} \frac{\xi_2^m}{\xi_1^n}\right| \leq \|g\|_{\infty, \CA_r^2}\left(1+\overset{\infty}{\underset{n=1}{\sum}}\frac{1}{r^{2n}}\right)=\frac{r^2\|g\|_{\infty, \CA_r^2}}{r^2-1}.
\end{align}
Also, note that
\begin{align}\label{eqn_305}
	g(\xi_1, z_2)=\overset{\infty}{\underset{m=0}{\sum}}\left[ \overset{\infty}{\underset{n=-\infty}{\sum}}a_{n,m} \xi_1^n\right] z_2^m+\overset{\infty}{\underset{m=1}{\sum}}\left[ \overset{\infty}{\underset{n=-\infty}{\sum}}a_{n,-m} \xi_1^n\right] \frac{1}{z_2^m},
\end{align}
which is a Laurent series representation of the function $g(\xi_1, .)$ in $\CA_r$. An application of Cauchy's estimate from one-variable theory gives that
\begin{align}\label{eqn_306}
	\left| \overset{\infty}{\underset{n=-\infty}{\sum}}a_{n,m} \xi_1^n\right| \leq \frac{\|g\|_{\infty, \CA_r^2}}{r^m},
	\qquad  \left|  \overset{\infty}{\underset{n=-\infty}{\sum}}a_{n,-m} \xi_1^n\right| \leq \frac{\|g\|_{\infty, \CA_r^2}}{r^m}.
\end{align}
We have that
\begin{align}\label{eqn_307}
	\left|\overset{\infty}{\underset{n=0}{\sum}} \left( \overset{\infty}{\underset{m=1}{\sum}}\frac{a_{n,-m}}{\xi_2^m}\right)\xi_1^n\right| 
	&=\left|\overset{\infty}{\underset{m=1}{\sum}} \left( \overset{\infty}{\underset{n=-\infty}{\sum}} a_{n,-m}\xi_1^n\right)\frac{1}{\xi_2^m}-	\overset{\infty}{\underset{n=1}{\sum}} \ \overset{\infty}{\underset{m=1}{\sum}}\frac{a_{-n,-m}}{\xi_1^n \xi_2^m}\right|\\
& \leq	\overset{\infty}{\underset{m=1}{\sum}} \left| \overset{\infty}{\underset{n=-\infty}{\sum}} a_{n,-m}\xi_1^n\right|\frac{1}{|\xi_2^m|} +\overset{\infty}{\underset{n=1}{\sum}} \ \overset{\infty}{\underset{m=1}{\sum}}\left|\frac{a_{-n,-m}}{\xi_1^n \xi_2^m}\right| \notag  \\
& \leq 	\overset{\infty}{\underset{m=1}{\sum}} \frac{\|g\|_{\infty, \CA_r^2}}{r^{2m}} +\overset{\infty}{\underset{n=1}{\sum}} \ \overset{\infty}{\underset{m=1}{\sum}}\frac{\|g\|_{\infty, \CA_r^2}}{r^{2n}r^{2m}}  \qquad \left[\text{by \eqref{eqn_306}} \ \& \ |a_{n,m}| \leq \frac{\|g\|_{\infty, \CA_r^2}}{r^{|n|+|m|}}\right] \notag \\
&=\|g\|_{\infty, \CA_r^2}\left(\frac{r}{r^2-1}\right)^2.
\end{align}
Combining the above estimates, we finally have that
\begin{align*}
	|g_1(\xi_1, \xi_2)|
		&=\left| \overset{\infty}{\underset{n=0}{\sum}}\ \overset{\infty}{\underset{m=-\infty}{\sum}}a_{n,m} \xi_1^n\xi_2^m-\overset{\infty}{\underset{n=0}{\sum}}  \ \overset{\infty}{\underset{m=1}{\sum}}\frac{a_{n,-m}}{\xi_2^m} \xi_1^n\right| 
	 \leq \|g\|_{\infty, \CA_r^2}\left[\left(\frac{r^2}{r^2-1}\right) + \left(\frac{r}{r^2-1}\right)^2\right],
	 \end{align*}
where the last inequality follows from \eqref{eqn_304} and \eqref{eqn_307}. Hence, we have
\begin{align}\label{eqn_308}
\|g_1\|_{\infty, r\mathbb{T} \times r\mathbb{T}} \leq \left(\frac{r^2}{r^2-1}\right)^2\|g\|_{\infty, \CA_r^2} \ .
\end{align}

\medskip 

\noindent \textbf{(2) An estimate for $\|g_2\|_{\infty, r\mathbb{T} \times r\mathbb{T}}$}. Let $(\xi_1, \xi_2)=(re^{i\theta}, re^{i\phi})$ for some $\theta, \phi \in \mathbb{R}$.  Note that
\begin{align}\label{eqn_309}
	g(z_1, \xi_2^{-1})
	=\overset{\infty}{\underset{n=0}{\sum}}\left[ \overset{\infty}{\underset{m=-\infty}{\sum}}a_{n,-m} \xi_2^{m}\right] z_1^n+\overset{\infty}{\underset{n=1}{\sum}}\left[ \overset{\infty}{\underset{m=-\infty}{\sum}}a_{-n,-m} \xi_2^m\right] \frac{1}{z_1^n},
\end{align}
which is again a Laurent series representation of the function $g(., \xi_2^{-1})$ in $\CA_r$. By Cauchy's estimate in one-variable, we have that
\begin{align}\label{eqn_310}
	\left| \overset{\infty}{\underset{m=-\infty}{\sum}}a_{n,-m} \xi_2^{m}\right|, \left|  \overset{\infty}{\underset{m=-\infty}{\sum}}a_{-n,-m} \xi_2^m\right| \leq \frac{\|g\|_{\infty, \CA_r^2}}{r^n}
\end{align}
for $n, m \in \mathbb{Z}$.
It follows from \eqref{eqn_309} \& \eqref{eqn_310} that
\begin{align}\label{eqn_311}
	\left|\overset{\infty}{\underset{n=0}{\sum}} \  \overset{\infty}{\underset{m=-\infty}{\sum}}a_{n,-m} \xi_2^{m} \xi_1^n\right|
	=\left|g(\xi_1, \xi_2^{-1})-\overset{\infty}{\underset{n=1}{\sum}}\ \overset{\infty}{\underset{m=-\infty}{\sum}}a_{-n,-m} \frac{ \xi_2^m}{\xi_1^n}\right|
	 \leq \|g\|_{\infty, \CA_r^2}\left(1+\overset{\infty}{\underset{n=1}{\sum}}\frac{1}{r^{2n}}\right)
	=\frac{r^2\|g\|_{\infty, \CA_r^2}}{r^2-1}.
\end{align}
An alternative description of $g(\xi_1, \xi_2^{-1})$, differing from \eqref{eqn_309}, is given by
\begin{align}\label{eqn_312}
	g(\xi_1, z_2^{-1})=\overset{\infty}{\underset{m=0}{\sum}}\left[ \overset{\infty}{\underset{n=-\infty}{\sum}}a_{n,m} \xi_1^n\right] z_2^{-m}+\overset{\infty}{\underset{m=1}{\sum}}\left[ \overset{\infty}{\underset{n=-\infty}{\sum}}a_{n,-m} \xi_1^n\right]z_2^m,
\end{align}
which is a Laurent series form of the map $z_2 \mapsto g(\xi_1, z_2^{-1})$ in $\CA_r$. By Cauchy's estimate from one-variable theory, we have
\begin{align}\label{eqn_313}
\left|\overset{\infty}{\underset{n=-\infty}{\sum}}a_{n,m} \xi_1^n \right|, \left|\overset{\infty}{\underset{n=-\infty}{\sum}}a_{n,-m} \xi_1^n\right| \leq \frac{\|g\|_{\infty, \CA_r^2}}{r^m}.
	\end{align} 
We also have that
\begin{align}\label{eqn_314}
	\left|	\overset{\infty}{\underset{n=0}{\sum}} \left(\overset{\infty}{\underset{m=0}{\sum}}\frac{a_{n,m}}{\xi_2^m}\right) \xi_1^n
	\right|
	& \leq	\overset{\infty}{\underset{m=0}{\sum}} \left| \overset{\infty}{\underset{n=-\infty}{\sum}} a_{n,m}\xi_1^n\right|\frac{1}{|\xi_2^m|} +\overset{\infty}{\underset{n=1}{\sum}} \ \overset{\infty}{\underset{m=0}{\sum}}\left|\frac{a_{-n,m}}{\xi_1^n \xi_2^m}\right| \notag  \\
	& \leq 	\overset{\infty}{\underset{m=0}{\sum}} \frac{\|g\|_{\infty, \CA_r^2}}{r^{2m}} +\overset{\infty}{\underset{n=1}{\sum}} \ \overset{\infty}{\underset{m=0}{\sum}}\frac{\|g\|_{\infty, \CA_r^2}}{r^{2n}r^{2m}}  \qquad \left[\text{by \eqref{eqn_313}} \ \& \ |a_{-n,m}| \leq \frac{\|g\|_{\infty, \CA_r^2}}{r^{|n|+|m|}}\right] \notag \\
	&=\|g\|_{\infty, \CA_r^2}\left(\frac{r^2}{r^2-1}\right)^2.
\end{align}
Combining the above estimates, we finally have that
\begin{align*}
	|g_2(\xi_1, \xi_2)|
	=\left| \overset{\infty}{\underset{n=0}{\sum}}\ \overset{\infty}{\underset{m=-\infty}{\sum}}a_{n, -m} \xi_1^n\xi_2^m-\overset{\infty}{\underset{n=0}{\sum}}   \ \overset{\infty}{\underset{m=0}{\sum}}\frac{a_{n,m}}{\xi_2^m} \xi_1^n\right|  \notag 
	 \leq \|g\|_{\infty, \CA_r^2}\left(\frac{r^2}{r^2-1}\right) + \|g\|_{\infty, \CA_r^2}\left(\frac{r^2}{r^2-1}\right)^2,
\end{align*}
where the last inequality follows from \eqref{eqn_311} and \eqref{eqn_314}. Hence, we have 
\begin{align}\label{eqn_315}
	\|g_2\|_{\infty, r\mathbb{T} \times r\mathbb{T}} \leq \left(\frac{r^2}{r^2-1}\right)\left(\frac{2r^2-1}{r^2-1}\right)\|g\|_{\infty, \CA_r^2} \ .
\end{align}

\medskip 

\noindent \textbf{(3) An estimate for $\|g_3\|_{\infty, r\mathbb{T} \times r\mathbb{T}}$}. 
Let $(\xi_1, \xi_2)=(re^{i\theta}, re^{i\phi})$ for some $\theta, \phi \in \mathbb{R}$.  Note that
\begin{align*}	
	g(z_1^{-1}, \xi_2)
	=\overset{\infty}{\underset{n=0}{\sum}}\left[ \overset{\infty}{\underset{m=-\infty}{\sum}}a_{-n,m} \xi_2^{m}\right] z_1^n+\overset{\infty}{\underset{n=1}{\sum}}\left[ \overset{\infty}{\underset{m=-\infty}{\sum}}a_{n,m} \xi_2^m\right] \frac{1}{z_1^n},
\end{align*}
which is a Laurent series representation of the function $z_1 \mapsto g(z_1^{-1}, \xi_2)$ in $\CA_r$. We also have that 
\begin{align*}	
	g(\xi_1^{-1}, z_2)
	=\overset{\infty}{\underset{m=0}{\sum}}\left[ \overset{\infty}{\underset{n=-\infty}{\sum}}a_{-n,m} \xi_1^{n}\right] z_2^m+\overset{\infty}{\underset{m=1}{\sum}}\left[ \overset{\infty}{\underset{n=-\infty}{\sum}}a_{-n,-m} \xi_1^n\right] \frac{1}{z_2^m},
\end{align*}
which is again a Laurent series representation of the function $g(\xi_1^{-1}, .)$ in $\CA_r$. Using computations similar to those leading to \eqref{eqn_315} for $g(z_1^{-1}, \xi_2)$ and $g(\xi_1^{-1}, z_2)$, we obtain that
\begin{align}\label{eqn_316}
	\|g_3\|_{\infty, r\mathbb{T} \times r\mathbb{T}} \leq \left(\frac{r^2}{r^2-1}\right)\left(\frac{2r^2-1}{r^2-1}\right)\|g\|_{\infty, \CA_r^2} \ .
\end{align}

\medskip 
\noindent \textbf{(4) An estimate for $\|g_4\|_{\infty, r\mathbb{T} \times r\mathbb{T}}$}. Following the same approach as in the cases leading to \eqref{eqn_315} and \eqref{eqn_316}, we have
\begin{align}\label{eqn_317}
	\|g_4\|_{\infty, r\mathbb{T} \times r\mathbb{T}} \leq \left(\frac{2r^2-1}{r^2-1}\right)^2\|g\|_{\infty, \CA_r^2} \ .
\end{align}

\medskip 

\noindent \textbf{Case $n=3$.} Let $g(z_1, z_2, z_3)$ be a holomorphic function on $\CA_r^3$. Using the Laurent series representation of $g$ in $\CA_r^3$, we can write $g(z_1, z_2, z_3)$ as
\begin{align}\label{eqn_4021}
	& g_{+++}(z_1, z_2, z_3)+g_{++-}(z_1, z_2, z_3^{-1})+g_{+-+}(z_1, z_2^{-1}, z_3)+g_{+--}(z_1, z_2^{-1}, z_3^{-1})+g_{-++}(z_1^{-1}, z_2, z_3) \notag \\
	& +g_{-+-}(z_1^{-1}, z_2, z_3^{-1})+g_{--+}(z_1^{-1}, z_2^{-1}, z_3)+g_{---}(z_1^{-1}, z_2^{-1}, z_3^{-1}),\notag \\
\end{align}
where 
\begin{small} 
\begin{align}\label{eqn_4022}
	& g_{+++}(z_1, z_2, z_3)=\overset{\infty}{\underset{\nu_1=0}{\sum}}\ \overset{\infty}{\underset{\nu_2=0}{\sum}}\ \overset{\infty}{\underset{\nu_3=0}{\sum}}a_{\nu_1, \nu_2, \nu_3} \ z_1^{\nu_1}z_2^{\nu_2}z_3^{\nu_3}, \qquad  
	g_{++-}(z_1, z_2, z_3)=\overset{\infty}{\underset{\nu_1=0}{\sum}}\ \overset{\infty}{\underset{\nu_2=0}{\sum}}\ \overset{\infty}{\underset{\nu_3=1}{\sum}}a_{\nu_1, \nu_2, -\nu_3} \ z_1^{\nu_1}z_2^{\nu_2}z_3^{\nu_3} \notag \\
	& g_{+-+}(z_1, z_2, z_3)=\overset{\infty}{\underset{\nu_1=0}{\sum}}\ \overset{\infty}{\underset{\nu_2=1}{\sum}}\ \overset{\infty}{\underset{\nu_3=0}{\sum}}a_{\nu_1, -\nu_2, \nu_3} \ z_1^{\nu_1}z_2^{\nu_2}z_3^{\nu_3}, \ \ \quad  
	g_{+--}(z_1, z_2, z_3)=\overset{\infty}{\underset{\nu_1=0}{\sum}}\ \overset{\infty}{\underset{\nu_2=1}{\sum}}\ \overset{\infty}{\underset{\nu_3=1}{\sum}}a_{\nu_1, -\nu_2, -\nu_3} \ z_1^{\nu_1}z_2^{\nu_2}z_3^{\nu_3} \notag \\
	& g_{-++}(z_1, z_2, z_3)=\overset{\infty}{\underset{\nu_1=1}{\sum}}\ \overset{\infty}{\underset{\nu_2=0}{\sum}}\ \overset{\infty}{\underset{\nu_3=0}{\sum}}a_{-\nu_1, \nu_2, \nu_3} \ z_1^{\nu_1}z_2^{\nu_2}z_3^{\nu_3}, \qquad  
	g_{-+-}(z_1, z_2, z_3)=\overset{\infty}{\underset{\nu_1=1}{\sum}}\ \overset{\infty}{\underset{\nu_2=0}{\sum}}\ \overset{\infty}{\underset{\nu_3=1}{\sum}}a_{-\nu_1, \nu_2, -\nu_3} \ z_1^{\nu_1}z_2^{\nu_2}z_3^{\nu_3} \notag \\
	& g_{--+}(z_1, z_2, z_3)=\overset{\infty}{\underset{\nu_1=1}{\sum}}\ \overset{\infty}{\underset{\nu_2=1}{\sum}}\ \overset{\infty}{\underset{\nu_3=0}{\sum}}a_{-\nu_1, -\nu_2, \nu_3} \ z_1^{\nu_1}z_2^{\nu_2}z_3^{\nu_3}, \  \quad  
	g_{---}(z_1, z_2, z_3)=\overset{\infty}{\underset{\nu_1=1}{\sum}}\ \overset{\infty}{\underset{\nu_2=1}{\sum}}\ \overset{\infty}{\underset{\nu_3=1}{\sum}}a_{-\nu_1, -\nu_2, -\nu_3} \ z_1^{\nu_1}z_2^{\nu_2}z_3^{\nu_3}. \notag \\
	\end{align}
\end{small}
Thus, $g$ is decomposed into $2^3=8$ parts as is shown in \eqref{eqn_4021}. Each of these eight parts is explicitly defined in \eqref{eqn_4022}. For any non-negative integer $k$, $(z_i^{-1})^k$ stands for $\displaystyle \frac{1}{z_i^k}$ for $1 \leq i \leq 3$. We show that the supremum norm of each of this eight part  over $r\mathbb{T} \times r\mathbb{T} \times r\mathbb{T}$ is bounded above by 
\[
\left(\frac{r^2}{r^2-1}\right)^{t}\left(\frac{2r^2-1}{r^2-1}\right)^{3-t},
\]
where $t \in \{0, 1, 2, 3\}$ depending on how many monomials among $z_1, z_2, z_3$ appear with non-negative powers in the decomposition of $g$ as in \eqref{eqn_4021}. For example, $t=3$ corresponding to $g_{+++}$, $t=2$ corresponding to $g_{++-}, g_{+-+}, g_{-++}$, $t=1$ corresponding to $g_{+--}, g_{-+-}, g_{--+}$ and $t=0$ corresponding to $g_{---}$. Let $(\xi_1, \xi_2, \xi_3)=(re^{i\theta_1}, re^{i\theta_2}, re^{i\theta_3})$ for some $\theta_1, \theta_2, \theta_3 \in \mathbb{R}$. Then
\begin{align}\label{eqn_423}
&\left|g_{+++}(\xi_1, \xi_2, \xi_3)\right| \notag \\
&=\left|\overset{\infty}{\underset{\nu_1=0}{\sum}}\ \overset{\infty}{\underset{\nu_2=0}{\sum}}\ \overset{\infty}{\underset{\nu_3=0}{\sum}}a_{\nu_1, \nu_2, \nu_3} \ \xi_1^{\nu_1}\xi_2^{\nu_2}\xi_3^{\nu_3}\right| \notag \\
&=\left|\overset{\infty}{\underset{\nu_1=0}{\sum}}\ \overset{\infty}{\underset{\nu_2=0}{\sum}}\ \overset{\infty}{\underset{\nu_3=-\infty}{\sum}}a_{\nu_1, \nu_2, \nu_3} \ \xi_1^{\nu_1}\xi_2^{\nu_2}\xi_3^{\nu_3}-\overset{\infty}{\underset{\nu_1=0}{\sum}}\ \overset{\infty}{\underset{\nu_2=0}{\sum}}\ \overset{\infty}{\underset{\nu_3=1}{\sum}}a_{\nu_1, \nu_2, -\nu_3} \ \xi_1^{\nu_1}\xi_2^{\nu_2}\xi_3^{-\nu_3}\right| \notag \\
&\leq \left|\overset{\infty}{\underset{\nu_1=0}{\sum}}\ \overset{\infty}{\underset{\nu_2=0}{\sum}}\ \left(\overset{\infty}{\underset{\nu_3=-\infty}{\sum}}a_{\nu_1, \nu_2, \nu_3}\xi_3^{\nu_3}\right) \ \xi_1^{\nu_1}\xi_2^{\nu_2}\right|+\left|\ \overset{\infty}{\underset{\nu_3=1}{\sum}}\left(\overset{\infty}{\underset{\nu_1=0}{\sum}}\ \overset{\infty}{\underset{\nu_2=0}{\sum}}a_{\nu_1, \nu_2, -\nu_3}\ \xi_1^{\nu_1}\xi_2^{\nu_2}\right) \frac{1}{\xi_3^{\nu_3}}\right| \notag \\
& \leq \|g\|_{\infty, \CA_r^3}\left(\frac{r^2}{r^2-1}\right)^2+	\|g\|_{\infty, \CA_r^3}\left(\frac{r^2}{r^2-1}\right)^2\overset{\infty}{\underset{\nu_3=1}{\sum}}\frac{1}{r^{2\nu_3}} \qquad [\text{by $n=2$ case}] \notag \\
&\leq \|g\|_{\infty, \CA_r^3}\left(\frac{r^2}{r^2-1}\right)^3.
\end{align}
A similar computation as above shows that
\begin{align}\label{eqn_424}
	&\left|g_{++-}(\xi_1, \xi_2, \xi_3)\right| \notag \\
	&=\left|\overset{\infty}{\underset{\nu_1=0}{\sum}}\ \overset{\infty}{\underset{\nu_2=0}{\sum}}\ \overset{\infty}{\underset{\nu_3=1}{\sum}}a_{\nu_1, \nu_2, -\nu_3} \ z_1^{\nu_1}z_2^{\nu_2}z_3^{\nu_3}\right| \notag \\
	&\leq \left|\overset{\infty}{\underset{\nu_1=0}{\sum}}\ \overset{\infty}{\underset{\nu_2=0}{\sum}}\ \left(\overset{\infty}{\underset{\nu_3=-\infty}{\sum}}a_{\nu_1, \nu_2, -\nu_3}\xi_3^{\nu_3}\right) \ \xi_1^{\nu_1}\xi_2^{\nu_2}\right|+\left|\ \overset{\infty}{\underset{\nu_3=0}{\sum}}\left(\overset{\infty}{\underset{\nu_1=0}{\sum}}\ \overset{\infty}{\underset{\nu_2=0}{\sum}}a_{\nu_1, \nu_2, \nu_3}\ \xi_1^{\nu_1}\xi_2^{\nu_2}\right) \frac{1}{\xi_3^{\nu_3}}\right| \notag \\
	& \leq \|g\|_{\infty, \CA_r^3}\left(\frac{r^2}{r^2-1}\right)^2+	\|g\|_{\infty, \CA_r^3}\left(\frac{r^2}{r^2-1}\right)^2\overset{\infty}{\underset{\nu_3=0}{\sum}}\frac{1}{r^{2\nu_3}} \qquad [\text{by $n=2$ case}]\notag \\
	&\leq \|g\|_{\infty, \CA_r^3}\left(\frac{r^2}{r^2-1}\right)^2\left(\frac{2r^2-1}{r^2-1}\right).
\end{align}
It is not difficult to see that the above inequality holds when $\left|g_{++-}(\xi_1, \xi_2, \xi_3)\right|$ on the right hand side is replaced by either $\left|g_{+-+}(\xi_1, \xi_2, \xi_3)\right|$ or $	\left|g_{-++}(\xi_1, \xi_2, \xi_3)\right|$. Again following the similar techniques as above, we have that
\begin{align}\label{eqn_425}
	&\left|g_{--+}(\xi_1, \xi_2, \xi_3)\right|\notag \\
	&=\left|\overset{\infty}{\underset{\nu_1=1}{\sum}}\ \overset{\infty}{\underset{\nu_2=1}{\sum}}\ \overset{\infty}{\underset{\nu_3=0}{\sum}}a_{-\nu_1, -\nu_2, \nu_3} \ \xi_1^{\nu_1}\xi_2^{\nu_2}\xi_3^{\nu_3}\right| \notag \\
	&\leq \left|\overset{\infty}{\underset{\nu_1=1}{\sum}}\ \overset{\infty}{\underset{\nu_2=1}{\sum}}\ \left(\overset{\infty}{\underset{\nu_3=-\infty}{\sum}}a_{-\nu_1, -\nu_2, \nu_3}\xi_3^{\nu_3}\right) \ \xi_1^{\nu_1}\xi_2^{\nu_2}\right|+\left|\ \overset{\infty}{\underset{\nu_3=1}{\sum}}\left(\overset{\infty}{\underset{\nu_1=1}{\sum}}\ \overset{\infty}{\underset{\nu_2=1}{\sum}}a_{-\nu_1, -\nu_2, -\nu_3}\ \xi_1^{\nu_1}\xi_2^{\nu_2}\right) \frac{1}{\xi_3^{\nu_3}}\right|\notag \\
	& \leq \|g\|_{\infty, \CA_r^3}\left(\frac{2r^2-1}{r^2-1}\right)^2+	\|g\|_{\infty, \CA_r^3}\left(\frac{2r^2-1}{r^2-1}\right)^2\overset{\infty}{\underset{\nu_3=1}{\sum}}\frac{1}{r^{2\nu_3}} \qquad [\text{by $n=2$ case}]\notag\\
	&\leq \|g\|_{\infty, \CA_r^3}\left(\frac{2r^2-1}{r^2-1}\right)^2\left(\frac{r^2}{r^2-1}\right).
\end{align}
Evidently, the above inequality holds when $\left|g_{--+}(\xi_1, \xi_2, \xi_3)\right|$ on the right hand side is replaced by either $\left|g_{-+-}(\xi_1, \xi_2, \xi_3)\right|$ or $	\left|g_{+--}(\xi_1, \xi_2, \xi_3)\right|$. Finally, we also have that
\begin{align}\label{eqn_426}
	&\left|g_{---}(\xi_1, \xi_2, \xi_3)\right| \notag\\
	&=\left|\overset{\infty}{\underset{\nu_1=1}{\sum}}\ \overset{\infty}{\underset{\nu_2=1}{\sum}}\ \overset{\infty}{\underset{\nu_3=1}{\sum}}a_{-\nu_1, -\nu_2, -\nu_3} \ \xi_1^{\nu_1}\xi_2^{\nu_2}\xi_3^{\nu_3}\right|\notag\\
	&\leq \left|\overset{\infty}{\underset{\nu_1=1}{\sum}}\ \overset{\infty}{\underset{\nu_2=1}{\sum}}\ \left(\overset{\infty}{\underset{\nu_3=-\infty}{\sum}}a_{-\nu_1, -\nu_2, -\nu_3}\xi_3^{\nu_3}\right) \ \xi_1^{\nu_1}\xi_2^{\nu_2}\right|+\left|\ \overset{\infty}{\underset{\nu_3=0}{\sum}}\left(\overset{\infty}{\underset{\nu_1=1}{\sum}}\ \overset{\infty}{\underset{\nu_2=1}{\sum}}a_{-\nu_1, -\nu_2, \nu_3}\ \xi_1^{\nu_1}\xi_2^{\nu_2}\right) \frac{1}{\xi_3^{\nu_3}}\right|\notag \\
	& \leq \|g\|_{\infty, \CA_r^3}\left(\frac{2r^2-1}{r^2-1}\right)^2+	\|g\|_{\infty, \CA_r^3}\left(\frac{2r^2-1}{r^2-1}\right)^2\overset{\infty}{\underset{\nu_3=0}{\sum}}\frac{1}{r^{2\nu_3}} \qquad [\text{by $n=2$ case}] \notag\\
	&\leq \|g\|_{\infty, \CA_r^3}\left(\frac{2r^2-1}{r^2-1}\right)^3.
\end{align}
This establishes the case when $n=3$. The general conclusion can be obtained in a similar way. 
\end{proof}

Before presenting the main result of this section, we include the following remark, which compares two distinct estimates for $\|g(T_1, T_2)\|$, where $(T_1, T_2)$ is a commuting pair of operators in $Q\A_r$ and $g \in \text{Rat}(\CA_r^2)$. One of these estimates is derived using techniques based on the work of Shields \cite{Shields}, whereas the other is obtained by directly applying Cauchy's estimates.

\begin{rem} For a commuting pair $(T_1, T_2)$ of operators in $Q\A_r$, we have by Theorem \ref{thm_pair} that 
\begin{align}\label{eqn_430}
\|g(T_1, T_2)\| \leq \left[4+4\left(\frac{r^2+1}{r^2-1}\right)^{1\slash 2}+\left(\frac{r^2+1}{r^2-1}\right)^2 \ \right] \|g\|_{\infty, \CA_r^2} 
\end{align}
for all $g \in \text{Rat}(\CA_r^2)$. The estimate that we obtain in \eqref{eqn_430} is particularly based on the work of Shields \cite{Shields} mentioned as in Section \ref{sec_biannulus}. However, by employing Cauchy's estimates, we obtain a different bound in Proposition \ref{prop_401}. In the proof of Theorem \ref{thm_pair}, we split $g$ into four parts and found estimates for each of them. We achieve different estimates for those four parts of $g$ in Proposition \ref{prop_401} when considering the case $n=2$. Consequently, the $n=2$ case in Proposition \ref{prop_401} gives
\begin{align}\label{eqn_431}
	\|g(T_1, T_2)\|  \leq \left(\frac{3r^2-1}{r^2-1}\right)^2\|g\|_{\infty, \CA_r^2}
\end{align}	 
for all $g \in \text{Rat}(\CA_r^2)$. A routine computation shows that
\[
\left[4+4\left(\frac{r^2+1}{r^2-1}\right)^{1\slash 2}+\left(\frac{r^2+1}{r^2-1}\right)^2 \ \right] < \left(\frac{3r^2-1}{r^2-1}\right)^2.
\]
Consequently, the estimate given in \eqref{eqn_430} is finer than the one given in \eqref{eqn_431}. \qed
\end{rem}

We now present the main result of this section.

\begin{thm}\label{thm_mainII}
	Suppose $(T_1, \dotsc, T_n)$ is a doubly commuting tuple of operators in $Q\A_r$. Then
	\[
	\displaystyle \|g(T_1, \dotsc, T_n)\| \leq \left(\frac{3r^2-1}{r^2-1}\right)^n\|g\|_{\infty, \CA_r^n} 
	\]
for all $g \in \text{Rat}(\CA_r^n)$.	
\end{thm}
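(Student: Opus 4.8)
The plan is to reduce everything to the polydisc von Neumann inequality for doubly commuting contractions by way of the decomposition already recorded in Proposition \ref{prop_401}, exactly as the $n=2$ case was handled in Theorem \ref{thm_pair}. Fix $g \in \text{Rat}(\CA_r^n)$. Such a $g$ extends holomorphically to a neighbourhood of $\CA_r^n$, so Proposition \ref{prop_401} applies and produces the splitting
\[
g(z_1, \dotsc, z_n) = \underset{\mu}{\sum}\, g_{\mu}\big(z_1^{\mu(1)}, \dotsc, z_n^{\mu(n)}\big),
\]
the sum running over all sign functions $\mu : \{1, \dotsc, n\} \to \{1, -1\}$, with each $g_{\mu}$ holomorphic on $(r\overline{\D})^n$ and satisfying $\|g_{\mu}\|_{\infty, (r\T)^n} \leq \left(\tfrac{r^2}{r^2-1}\right)^{t}\left(\tfrac{2r^2-1}{r^2-1}\right)^{n-t}\|g\|_{\infty, \CA_r^n}$, where $t = |\mu^{-1}(\{1\})|$. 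Substituting the operator tuple gives $g(T_1, \dotsc, T_n) = \sum_{\mu} g_{\mu}(T_1^{\mu(1)}, \dotsc, T_n^{\mu(n)})$, so the triangle inequality reduces the task to bounding each summand.

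The heart of the argument is to verify that each mixed tuple $(T_1^{\mu(1)}, \dotsc, T_n^{\mu(n)})$ is again doubly commuting. First I would check that if $T_iT_j=T_jT_i$ and $T_iT_j^*=T_j^*T_i$ for $i \ne j$, then the same relations survive when any of the $T_i$ are replaced by $T_i^{-1}$: commutativity of $T_i$ with an invertible operator passes to $T_i^{-1}$, and $T_i$ commuting with the invertible $T_j^*$ forces $T_i$ to commute with $(T_j^*)^{-1} = (T_j^{-1})^*$. Hence $(T_1^{\mu(1)}, \dotsc, T_n^{\mu(n)})$ is doubly commuting, and since every operator in $Q\A_r$ has norm and inverse-norm at most $r$, each entry has norm $\leq r$. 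Writing $S_i = T_i^{\mu(i)}/r$ for the scaled contractions and $h_{\mu}(w_1, \dotsc, w_n) = g_{\mu}(rw_1, \dotsc, rw_n)$, one has $g_{\mu}(T_1^{\mu(1)}, \dotsc, T_n^{\mu(n)}) = h_{\mu}(S_1, \dotsc, S_n)$ and $\|h_{\mu}\|_{\infty, \T^n} = \|g_{\mu}\|_{\infty, (r\T)^n}$. Feeding $(S_1, \dotsc, S_n)$ into the dilation theorem for doubly commuting contractions quoted at the start of this section and combining the resulting commuting unitary dilation with the spectral mapping principle yields
\[
\big\|g_{\mu}(T_1^{\mu(1)}, \dotsc, T_n^{\mu(n)})\big\| \leq \|g_{\mu}\|_{\infty, (r\T)^n}.
\]

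Finally I would assemble the pieces. Inserting the bound on $\|g_{\mu}\|_{\infty,(r\T)^n}$ from Proposition \ref{prop_401} and grouping the $\binom{n}{t}$ sign functions $\mu$ with exactly $t$ values equal to $1$, the triangle inequality gives
\[
\|g(T_1, \dotsc, T_n)\| \leq \left[\overset{n}{\underset{t=0}{\sum}} \binom{n}{t} \left(\frac{r^2}{r^2-1}\right)^{t}\left(\frac{2r^2-1}{r^2-1}\right)^{n-t}\right]\|g\|_{\infty, \CA_r^n},
\]
and the binomial theorem collapses the bracket to $\left(\frac{r^2}{r^2-1}+\frac{2r^2-1}{r^2-1}\right)^n = \left(\frac{3r^2-1}{r^2-1}\right)^n$, which is exactly the claimed constant. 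I expect the main obstacle to be structural rather than computational: one must be certain that the dilation theorem is genuinely available for every one of the $2^n$ mixed tuples, i.e. that double commutativity is stable under inverting individual entries; once that is settled, the remainder is the decomposition of Proposition \ref{prop_401} together with a one-line binomial summation, with a routine density argument covering the passage from the holomorphic splitting to all of $\text{Rat}(\CA_r^n)$.
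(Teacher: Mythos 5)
Your proposal is correct and follows essentially the same route as the paper: the decomposition of Proposition \ref{prop_401}, von Neumann's inequality for each of the $2^n$ mixed doubly commuting tuples $(T_1^{\mu(1)},\dotsc,T_n^{\mu(n)})$, and the binomial collapse to $\left(\frac{3r^2-1}{r^2-1}\right)^n$. The only difference is that you explicitly verify that double commutativity is stable under inverting individual entries and spell out the rescaling to contractions, details the paper leaves implicit; your closing worry about a density argument is unnecessary since elements of $\text{Rat}(\CA_r^n)$ are already holomorphic on a neighbourhood of $\CA_r^n$.
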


\begin{proof}
	Assume that $g \in \text{Rat}(\CA_r^n)$. Let $\mathcal{M}$ be the collection of all functions $\mu: \{1, \dotsc, n\} \to \{1, -1\}$ and let $|\mu^{-1}(\{1\})|$ be the cardinality of the set $\mu^{-1}\{1\}$. It follows from Proposition \ref{prop_401} that there is a decomposition of $g$ into $2^n$ functions given by
	\begin{align*} 
		g(z_1, \dotsc, z_n)=\underset{\mu \in \mathcal{M}}{\sum} \ g_{\mu}\left(z_1^{\mu(1)}, \dotsc, z_n^{\mu(n)}\right) \quad \text{on \ $\CA_r^n$}.
	\end{align*}
Moreover, each $g_{\mu}$ is a holomorphic function on the polydisc $(r\overline{\mathbb{D}})^n$. Since $\|T_j\|, \|T_j^{-1}\| \leq r$ for $1 \leq j \leq n$, the Taylor-joint spectrum of each of the $2^n$ doubly commuting tuples $\left(T_1^{\mu(1)}, \dotsc, T_n^{\mu(n)}\right)$ is contained in $(r\overline{\mathbb{D}})^n$. Now, an application of von Neumann's inequality on these $2^n$ doubly commuting operator tuples gives that
\begin{align*}
		\|g(T_1, \dotsc, T_n)\| 
		& \leq \underset{\mu \in \mathcal{M}}{\sum} \left\|g_{\mu}\left(T_1^{\mu(1)}, \dotsc, T_n^{\mu(n)}\right)\right\| \\
		& \leq \left[\underset{\mu \in \mathcal{M}}{\sum} \left(\frac{r^2}{r^2-1}\right)^{|\mu^{-1}(\{1\})|}\left(\frac{2r^2-1}{r^2-1}\right)^{n-|\mu^{-1}(\{1\})|}\right]\|g\|_{\infty, \CA_r^n} \qquad [\text{by Proposition \ref{prop_401}}]\\
		&=\left[\overset{n}{\underset{t=0}{\sum}}\binom{n}{t}\left(\frac{r^2}{r^2-1}\right)^{t}\left(\frac{2r^2-1}{r^2-1}\right)^{n-t}\right]\|g\|_{\infty, \CA_r^n}\\
		&=\left(\frac{3r^2-1}{r^2-1}\right)^n\|g\|_{\infty, \CA_r^n}.
\end{align*}	
The proof is now complete.
\end{proof}

\section{Bounds for optimal $K$-spectral constant for operator tuples in $Q\A_r$}\label{sec_lower}

\vspace{0.1cm} 

\noindent As mentioned in Section \ref{sec_intro}, the question of determining the smallest constant $K(\CA_r)$ for which $\CA_r$ is a $K(\CA_r)$-spectral set for every operator in \(Q\mathbb{A}_r\) is a challenging problem which still remains open. This problem has been investigated in the literature \cite{Badea, Pascoe, Shields, TsikalasII} (see also references therein). In this direction, Tsikalas \cite{TsikalasII} established a notable lower bound $K(\CA_r) \ge 2$, improving earlier estimates from \cite{Badea}. Moreover, it follows from Theorem \ref{thm_main} that
\[
2 \leq K(\CA_r) \le 2\left(1 + \frac{2r^2}{r^4 - 1}\right) \quad \text{and so,} \quad \lim_{r\to \infty}K(\CA_r)=2.
\]
In this section, we explore the multivariable analog of this result for commuting pairs and doubly commuting tuples of operators in $Q\A_r$. Suppose $K_{dc}(\CA_r^n)$ is the smallest (or optimal) constant for which the polyannulus $\CA_r^n$ is a $K_{dc}(\CA_r^n)$-spectral set for every doubly commuting operator $n$-tuple in $Q\A_r$.  It is not difficult to see that the function 
\[
\mathfrak{K}: (1, \infty) \to \mathbb{R}, \quad \text{given by} \quad \mathfrak{K}(r)= K_{dc}(\CA_r^n)
\]
is a non-increasing function. This property simply follows from the fact that $\CA_r \subseteq \CA_s$ for $1 < r \leq s$. Thus, $\lim_{r \to \infty} K_{dc}(\CA_r^n)$ exists (not necessarily finite). Our next result shows that this limit is indeed finite, and lies between $2^n$ and $3^n$.

\begin{thm}\label{thm:polyannulus}
	Let $K_{dc}(\CA_r^n)$ be the smallest constant for which the polyannulus $\CA_r^n$ is a $K_{dc}(\CA_r^n)$-spectral set for every doubly commuting operator $n$-tuple in $Q\A_r$. Then 
\[
2^n \leq K_{dc}(\CA_r^n) \leq \left(\frac{3r^2-1}{r^2-1}\right)^n.
\] 
Moreover, $\displaystyle 2^n \leq \lim_{r \to \infty} K_{dc}(\CA_r^n) \leq 3^n$.
\end{thm}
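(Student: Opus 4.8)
The plan is to establish the two inequalities separately and then read off the limit. The upper bound $K_{dc}(\CA_r^n) \leq \left(\frac{3r^2-1}{r^2-1}\right)^n$ is immediate: Theorem \ref{thm_mainII} asserts precisely that every doubly commuting $n$-tuple in $Q\A_r$ admits $\CA_r^n$ as a $\left(\frac{3r^2-1}{r^2-1}\right)^n$-spectral set, so by the very definition of $K_{dc}(\CA_r^n)$ as the smallest such constant, the bound follows at once. The real content therefore lies in the lower bound $K_{dc}(\CA_r^n) \geq 2^n$, which I would obtain by lifting the single-variable estimate $K(\CA_r) \geq 2$ of Tsikalas to $n$ variables through an ampliation (tensor) construction.

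Concretely, I would first note that $K(\CA_r)$ equals the supremum of $\frac{\|h(T)\|}{\|h\|_{\infty, \CA_r}}$ over all $T \in Q\A_r$ and all non-constant $h \in \text{Rat}(\CA_r)$. Given any such $T$ acting on $\HS$, I would form the tuple $(S_1, \dotsc, S_n)$ on the tensor power $\HS^{\otimes n}$ by setting $S_j = I^{\otimes(j-1)} \otimes T \otimes I^{\otimes(n-j)}$. Since distinct factors act on separate legs of the tensor product, $S_iS_j = S_jS_i$ and $S_iS_j^* = S_j^*S_i$ for $i \neq j$, so $(S_1, \dotsc, S_n)$ is doubly commuting; moreover each $S_j$ is invertible with $\|S_j\| = \|T\| \leq r$ and $\|S_j^{-1}\| = \|T^{-1}\| \leq r$, hence $S_j \in Q\A_r$. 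As each $S_j \in Q\A_r$ forces $\sigma(S_j) \subseteq \CA_r$, the projection property of the Taylor joint spectrum gives $\sigma_T(S_1, \dotsc, S_n) \subseteq \sigma(S_1)\times\dotsb\times\sigma(S_n) \subseteq \CA_r^n$, so the functional calculus on $\CA_r^n$ is legitimate.

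Next I would test this tuple against the product function $g(z_1, \dotsc, z_n) = \prod_{j=1}^{n} h(z_j)$, which lies in $\text{Rat}(\CA_r^n)$ whenever $h \in \text{Rat}(\CA_r)$. The two multiplicativity facts $g(S_1, \dotsc, S_n) = h(T)^{\otimes n}$ and $\|g\|_{\infty, \CA_r^n} = \|h\|_{\infty, \CA_r}^{\,n}$, together with multiplicativity of the operator norm under tensor powers, $\|h(T)^{\otimes n}\| = \|h(T)\|^n$, yield
\[
\frac{\|g(S_1, \dotsc, S_n)\|}{\|g\|_{\infty, \CA_r^n}} = \left(\frac{\|h(T)\|}{\|h\|_{\infty, \CA_r}}\right)^{n}.
\]
Since $K_{dc}(\CA_r^n)$ dominates the left-hand side for every admissible $T$ and $h$, taking the supremum over $T$ and $h$ and using that $t \mapsto t^n$ is increasing to pass the supremum through the power gives $K_{dc}(\CA_r^n) \geq K(\CA_r)^n \geq 2^n$, the last step invoking Tsikalas' bound.

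Finally, for the limit statement: the inequality $K_{dc}(\CA_r^n) \geq 2^n$ holds uniformly in $r$, so $\lim_{r \to \infty} K_{dc}(\CA_r^n) \geq 2^n$; on the other side, $\frac{3r^2-1}{r^2-1} \to 3$ as $r \to \infty$, whence $\left(\frac{3r^2-1}{r^2-1}\right)^n \to 3^n$, and since the non-increasing limit $\lim_{r\to\infty} K_{dc}(\CA_r^n)$ was already shown to exist, the upper bound passes to the limit to give $\lim_{r\to\infty} K_{dc}(\CA_r^n) \leq 3^n$. I expect the only genuinely technical points to be the containment of the Taylor joint spectrum in $\CA_r^n$ for the ampliated tuple and the multiplicativity of the functional calculus and norm under tensoring; everything else — the membership verifications, the double-commutativity, and the interchange of supremum with the $n$-th power — is formal.
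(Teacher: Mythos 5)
Your proof is correct and follows essentially the same route as the paper: the paper also obtains the lower bound by tensoring a one-variable extremal example into a doubly commuting $n$-tuple $I\otimes\cdots\otimes S\otimes\cdots\otimes I$ and testing it against the product function $\prod_{j}g_m(z_j)$, the only difference being that it works concretely with Tsikalas' weighted bilateral shift $S$ and the functions $g_m(z)=r^{-m}(z^m+z^{-m})$ (letting $m\to\infty$), whereas you abstract this into the cleaner inequality $K_{dc}(\CA_r^n)\geq K(\CA_r)^n$ and then invoke Tsikalas' bound $K(\CA_r)\geq 2$ as a black box. Both versions are sound; yours has the small added benefit that any future improvement of the one-variable lower bound lifts automatically to the polyannulus.
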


\begin{proof}
We recall the construction of a weighted bilateral shift due to Tsikalas \cite{TsikalasII}. To do so, consider the sequence $\{\xi_k\}_{k\in\Z}$ of positive numbers (weights) given by
	\begin{align*}
	\xi_{2\ell p+q} =r^q \quad \text{and} \quad 	\xi_{(2\ell+1)p+q}=r^{p-q} \ \ \text{for all} \quad q\in\{0,1,\dots, p\} \ \text{and} \ \ell \in \Z.	
	\end{align*}  
Consider the Hilbert space	consisting of sequences given by
	\[
	L^2(\xi)=\left\{\alpha=\{\alpha_k \}_{k\in\Z} \ :\ \|\alpha\|^2_{\xi}=\sum_{k\in\Z}|\alpha_k \xi_k|^2<\infty\right\},
	\]
with inner product $\displaystyle \langle \alpha, \gamma\rangle_{\xi}:=\sum_{k\in\Z} \alpha_k\overline{\gamma_k}\xi_k^2$. Let $S$ be the weighted bilateral shift on $L^2(\xi)$ given by $S\alpha=\{\alpha_{k-1}\}_{k \in \Z}$. For $\displaystyle g_m(z)=r^{-m}(z^m+z^{-m})$ with $m \in \mathbb{N}$, the proof of Theorem 1.1 in \cite{TsikalasII} shows
\begin{equation}\label{eqn_S}
\frac{\|g_m(S)\|}{\|g_m\|_{\infty, \CA_r}} \geq \frac{2r^m}{r^m+r^{-m}}.
\end{equation}
Consider the $n$-fold tensor product Hilbert space $\mathcal{H}= L^2(\xi) \otimes \cdots \otimes L^2(\xi)$. For $1 \leq j \leq n$, define 
	\[
	T_j := I \otimes \dots \otimes I \otimes \underbrace{S}_{j\text{-th factor}} \otimes I \otimes \dots \otimes I.
	\]
Clearly, each $T_j$ is invertible with $\|T_j\| = \|S\| \le r$ and $\|T_j^{-1}\| = \|S^{-1}\| \le r$. Moreover, $\underline{T}=(T_1, \dotsc, T_n)$ is a doubly commuting tuple since $T_i, T_j$ involve $S$ at different positions in their tensor product form for $i \ne j$. Now define a sequence of multivariable functions 
	\[
	f_m(z_1,\dots,z_n) := \prod_{j=1}^{n} g_m(z_j) \in \text{Rat}(\CA_r^n).
	\]
By the holomorphic functional calculus for commuting operators, we have
	\[
	f_m(T_1,\dots,T_n) = \prod_{j=1}^n g_m(T_j) = g_m(S) \otimes  \dots \otimes g_m(S) \ \ (n \text{ times}) \ \ \text{and so,} \ \ \|f_m(T_1,\dots,T_n)\|=\|g_m(S)\|^n.
	\]	
Since $\|f_m\|_{\infty, \CA_r^n} \leq  \prod_{j=1}^n \|g_m\|_{\infty, \CA_r^n} = \left(\|g_m\|_{\infty, \CA_r^n}\right)^n$, it follows from \eqref{eqn_S} that
\[
K_{dc}(\CA_r^n) \geq \frac{\|f_m(T_1,\dots,T_n)\|}{\|f_m\|_{\infty, \CA_r^n}} \geq  \left( \frac{\|g_m(S)\|}{\|g_m\|_{\CA_r^n}} \right)^n \geq \left(\frac{2r^m}{r^m+r^{-m}}\right)^n.
\]	
As $m \to \infty$, $K_{dc}(\CA_r^n) \geq 2^n$. We have by Theorem \ref{thm_mainII} that $\displaystyle K_{dc}(\CA_r^n) \leq \left(\frac{3r^2-1}{r^2-1}\right)^n$. From the discussion preceding the statement of the theorem, $\lim_{r \to \infty} K_{dc}(\CA_r^n)$ exists and the result follows.
\end{proof}

Moving forward, we establish a result analogous to Theorem \ref{thm:polyannulus} for a commuting pair of operators in $Q\A_r$. Let $K(\CA_r^2)$ be the smallest constant for which the biannulus $\CA_r^2$ is a $K(\CA_r^2)$-spectral set for every commuting pair of operators in $Q\A_r$. Since every doubly commuting tuple is a commuting one, we have that
$
\|f(T_1, T_2)\| \leq K(\CA_r^2) \|f\|_{\infty, \CA_r^2}
$
for every doubly commuting pair $(T_1, T_2)$ of operators in $Q\A_r$ and $f \in \text{Rat}(\CA_r^2)$. By minimality, we have that $K_{dc}(\CA_r^2) \leq K(\CA_r^2)$. It now follows from Theorem \ref{thm:polyannulus} that $2^2 \leq K_{dc}(\CA_r^2) \leq K(\CA_r^2)$. Finally, an upper bound on $K(\CA_r^2)$ is guaranteed by Theorem \ref{thm_pair}, leading to the following result.

\begin{thm}
If $K(\CA_r^2)$ is the smallest constant for which $\CA_r^2$ is a $K(\CA_r^2)$-spectral set for every commuting pair of operators in $Q\A_r$, then 
\[
2^2  \leq K(\CA_r^2) \leq \left[4+4\left(\frac{r^2+1}{r^2-1}\right)^{1\slash 2}+\left(\frac{r^2+1}{r^2-1}\right)^2 \ \right] \quad \text{and} \quad 2^2 \leq \lim_{r \to \infty} K(\CA_r^2) \leq 3^2.
\]
\end{thm}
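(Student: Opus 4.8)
The plan is to assemble the two-sided bound from results already in hand, treating the stated theorem essentially as a corollary of Theorem \ref{thm_pair} and Theorem \ref{thm:polyannulus}. For the upper bound I would invoke Theorem \ref{thm_pair} directly: it asserts that for every commuting pair $(T_1, T_2)$ in $Q\A_r$ and every $g \in \text{Rat}(\CA_r^2)$ one has $\|g(T_1,T_2)\| \le C(r)\|g\|_{\infty,\CA_r^2}$, where $C(r) = 4 + 4\left(\frac{r^2+1}{r^2-1}\right)^{1/2} + \left(\frac{r^2+1}{r^2-1}\right)^2$. Thus $C(r)$ is one admissible spectral constant for commuting pairs, and since $K(\CA_r^2)$ is by definition the infimum of all such constants, $K(\CA_r^2) \le C(r)$.

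For the lower bound I would use the comparison with the doubly commuting case carried out in the paragraph preceding the statement. Every doubly commuting pair in $Q\A_r$ is in particular a commuting pair, so the constant $K(\CA_r^2)$ is itself admissible for doubly commuting pairs; minimality of $K_{dc}(\CA_r^2)$ then gives $K_{dc}(\CA_r^2) \le K(\CA_r^2)$. Theorem \ref{thm:polyannulus} with $n = 2$ supplies $2^2 \le K_{dc}(\CA_r^2)$, whence $2^2 \le K(\CA_r^2)$. This chain rests on Tsikalas's weighted-shift construction already recorded in the proof of Theorem \ref{thm:polyannulus}, so no new extremal example is required.

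It remains to treat the limit. First I would record that $r \mapsto K(\CA_r^2)$ is non-increasing, by the same reasoning (based on the inclusion $\CA_r \subseteq \CA_s$ for $1 < r \le s$) used for $K_{dc}$ in the discussion preceding Theorem \ref{thm:polyannulus}; being bounded below by $2^2$, its limit as $r \to \infty$ then exists. The elementary computation $\frac{r^2+1}{r^2-1} = 1 + \frac{2}{r^2-1} \to 1$ gives $C(r) \to 4 + 4 + 1 = 3^2$, so that $\limsup_{r\to\infty} K(\CA_r^2) \le 3^2$; combining this with $K(\CA_r^2) \ge 2^2$ for every $r$ yields $2^2 \le \lim_{r\to\infty} K(\CA_r^2) \le 3^2$.

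Since both genuinely substantive inputs, namely the upper estimate of Theorem \ref{thm_pair} (via Shields's method together with Ando's inequality) and the lower estimate of Theorem \ref{thm:polyannulus} (via Tsikalas's shift), are already established, I do not expect a serious obstacle. The one point demanding care is the existence of the limit: the sandwich $2^2 \le K(\CA_r^2) \le C(r)$ by itself does not force convergence, because the lower bound need not meet the value $3^2$ to which the upper bound $C(r)$ decreases. Hence the monotonicity of $r \mapsto K(\CA_r^2)$ must be argued separately, exactly as in the doubly commuting case, before the limiting bounds can be asserted.
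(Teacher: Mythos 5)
Your proposal is correct and follows essentially the same route as the paper: the upper bound is read off from Theorem \ref{thm_pair}, the lower bound comes from $2^2 \leq K_{dc}(\CA_r^2) \leq K(\CA_r^2)$ via Theorem \ref{thm:polyannulus} and the inclusion of doubly commuting pairs among commuting pairs, and the limit is handled by the monotonicity of $r \mapsto K(\CA_r^2)$ together with $C(r) \to 3^2$. Your explicit remark that the sandwich alone does not give convergence and that monotonicity must be invoked separately is exactly the point the paper also relies on.
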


\begin{proof}
The first inequality in the statement of the theorem follows from the above discussion and Theorem \ref{thm_pair}. It is easy to see that the map $r \mapsto K(\CA_r^2)$ is a non-increasing real-valued function on $(1, \infty)$. So, $\lim_{r \to \infty} K(\CA_r^2)$ exists, and the desired conclusion follows. 
\end{proof}

\vspace{0.1cm}

\noindent \textbf{Funding.} The first named author is supported in part by the ``Core Research Grant" with Award No. CRG/2023/005223 from Anusandhan National Research Foundation (ANRF) of Govt. of India. The third named author is supported via the IIT Bombay RDF Grant of the first named author with Project Code RI/0115-10001427.

\end{document}